\newtheorem{thm}{Theorem}[section]
\newtheorem{lem}[thm]{Lemma}
\newtheorem{cor}[thm]{Corollary}
\theoremstyle{definition}
\newtheorem*{dfn}{Definition}
\newtheorem{example}{Example}
\newcommand{\R}{\mathbb R}
\newcommand{\N}{\mathbb N}
\newcommand{\Z}{\mathbb Z}
\newcommand{\p}{\mathfrak{p}}
\def\al{\alpha}
\def\la{\lambda}
\def\s{\sigma}
\def\d{\delta}
\def\t{\theta}
\def\om{\omega}
\def\Ga{\Gamma}
\def\ol{\overline}
\newcounter{cs}
\newcommand{\casos}{\begin{itemize}}
\newcommand{\fcasos}{\end{itemize}\setcounter{cs}{1}}
\newfont{\tit}{cmr12 scaled \magstep3}
\begin{document}

\title[]{A Dedekind's Criterion over Valued Fields}

\author{Lhoussain El Fadil}
\address[L. El Fadil]{Department of Mathematics, Faculty of Sciences Dhar-Mahraz, University of Sidi Mohamed Ben Abdellah, B.P. 1796, Fes, Morocco} \email{lhouelfadil2@gmail.com}

\author{Mhammed Boulagouaz}
\address[M. Boulagouaz]{Department of Mathematics, Faculty of Sciences and Technologies, University of Sidi Mohamed Ben Abdellah, B.P. 2202, Fes, Morocco} \email{boulag@rocketmail.com}

\author{Abdulaziz Deajim}
\address[A. Deajim]{Department of Mathematics, King Khalid University, P.O. Box 9004, Abha, Saudi Arabia} \email{deajim@kku.edu, deajim@gmail.com}

\keywords{Dedekind's Criterion, valued field, extensions of a valuation, integral closure}
\subjclass[2010]{12J10, 13A18, 13B22}
\date{\today}

\begin {abstract}
Let $(K,\nu)$ be an arbitrary-rank valued field, $R_\nu$ its valuation ring, $K(\al)/K$ a separable finite field extension generated over $K$ by a root of a monic irreducible polynomial $f\in R_\nu[X]$. We give necessary and sufficient conditions for $R_\nu[\al]$ to be integrally closed. We further characterize the integral closedness of $R_\nu[\al]$ based on information about the valuations on $K(\al)$ extending $\nu$. Our results enhance and generalize some existing results in the relevant literature. Some applications and examples are also given.
\end {abstract}
\maketitle

\section{{\bf Introduction}}\label{intro}

For any valued field $(K, \nu)$, we denote by $\ol{K}$ an algebraic closure of $K$, $R_\nu$ the valuation ring of $\nu$, $M_\nu$ the maximal ideal of $R_\nu$, $k_\nu=R_\nu/M_\nu$ the residue field of $\nu$, and $\Ga_\nu$ the (totally ordered abelian) value group of $\nu$. We denote by $\Ga_\nu^+$ the set of elements $g\in \Ga_\nu$ such that $g>0$. We denote a minimum element of $\Ga_\nu^+$, if any, by $\min(\Ga_\nu^+)$. We also denote by $\nu^{\mbox{\tiny G}}$ the Gaussian extension of $\nu$ to the field $K(X)$ of rational functions; that is, for $f(X)=\sum_{i=0}^m a_i X^i\in K[X]$, set $\nu^{\mbox{\tiny G}}(f)=\min\{\nu(a_0), \dots, \nu(a_m)\}$ and extend to $K(X)$ as  $\nu^{\mbox{\tiny G}}(f/g)=\nu^{\mbox{\tiny G}}(f)-\nu^{\mbox{\tiny G}}(g)$ for $f, g\in K[X]$ and $g\neq 0$.

Let $(K,\nu)$ be a valued field of arbitrary rank, $f\in R_\nu[X]$ a monic irreducible separable polynomial, $\al\in \ol{K}$ a root of $f$, $L=K(\al)$ the simple field extension over $K$ generated by $\al$, and $S$ the integral closure of $R_\nu$ in $L$. Assume that $\ol{f}=\prod_{i=0}^s \ol{\phi_i}^{\, l_i}$ is the monic irreducible factorization of $\ol{f}$ over $k_\nu$, and $\phi_i\in R_\nu[X]$ is a monic lifting of $\ol{\phi_i}$ for $i=1, \dots, s$. For the sake of brevity, we shall refer to these notations and assumptions as {\it\textbf{Assump's}}.

Under {\it\textbf{Assump's}}, if $R_\nu$ is a discrete valuation ring and $M_\nu$ does not divide the index ideal $[S:R_\nu[\al]]$, then a well-known theorem of Kummer (see \cite[Proposition 8.3]{Neu} for instance) gives the factorization of the ideal $M_\nu \,S$; namely, $M_\nu\,S=\prod_{i=1}^s \p_i^{l_i}$, where $\displaystyle{\p_i=M_\nu\,S+\phi_i(\al)\,S}$ with residue degree equal to $\mbox{deg}(\phi_i)$. R. Dedekind (\cite{Ded}) gave a criterion for the divisibility of $\displaystyle{[S: R[\al]]}$ by $M_\nu$, which was also extended in \cite{KK2}. For an arbitrary valuation $\nu$ in general, Y. Ershov (in \cite{Er}) introduced a nice generalized version of Dedekind's Criterion. Namely, he showed that if we write $f$ in the form $f=\prod_{i=1}^s \phi_i^{l_i}+ \pi T$ for some $\pi\in M_\nu$ and $\displaystyle{T\in (R_\nu-M_\nu)[X]}$, then $R_\nu[\al]$ is integrally closed (i.e. $R_\nu[\al]=S$) if and only if either $l_i =1$ for all $i=1, \dots, s$ or, else, $\nu(\pi) =\min(\Ga_\nu^+)$ and $\ol{\phi_i}$ does not divide $\ol{T}$ for all those $i=1, \dots, s$ with $l_i \geq 2$. S. Khanduja and M. Kumar gave a different elegant proof of Ershov's result in \cite[Theorem 1.1]{KK1}. It should be noted, however, that the proof given in \cite{KK1} is to be taken within the separability context despite not stating this, as the proof relies on \cite[17.17]{End} which explicitly requires the separability of $f$.

on does not explicitly state the assumption that $f$ is separable, both references crucially use, which in turn assumes the separability of $f$

Assuming {\it\textbf{Assump's}}, we give in Theorem \ref{main} another version of Dedekind's Criterion where we utilize the Euclidean division of $f$ by $\phi_i$ for every $i=1, \dots, s$ with $l_i\geq 2$ in such a way that computationally enhances \cite[Theorem 1.1]{KK1} and further improves \cite[Theorem 4.1]{KK1} as our result does not require $K$ to be Henselian. In Theorem \ref{main 2}, we give a complete characterization of the integral closedness of $R_\nu[\al]$ based on the valuations of $L$ extending $\nu$ and their values at $\phi_i(\al)$ for $i=1, \dots, s$ with $l_i\geq 2$. In this case, we further compute the ramification indices and residue degrees of all the valuations of $L$ extending $\nu$ (Corollary \ref{ram-res}). 
Some further applications and examples are given in Section \ref{applications}


\section{{\bf The Main Results}}

Keeping the notations of {\it\textbf{Assump's}}, denote by $(K^h, \nu^h)$ the Henselization of $(K, \nu)$ and by $\ol{\nu^h}$ the unique extension of $\nu^h$ to the algebraic closure $\ol{K^h}$ of $K^h$. 

We begin this section with the following important, well-known result, which we present without proof (see for instance \cite[17.17]{End}). The result asserts a one-to-one correspondence between the valuations on $L$ extending $\nu$ and the irreducible factors of $f$ over $K^h$.

\begin{lem}\label{Endler}
Keep the notation and assumptions as in \textbf{Assump's}. Let $f=\prod_{j=1}^t f_j$ be the factorization of $f$ into a product of distinct monic irreducible polynomials over $K^h$. Then there are exactly $t$ extensions $\om_1, \dots, \om_t$ of $\nu$ to $L$. Morevoer, if $\al_j$ is any root of $f_j$ in $\ol{K^h}$ for \linebreak $j\in \{1, \dots, t\}$, then the valuation $\om_j$ corresponding to $f_j$ is precisely the valuation on $L$ \linebreak satisfying: $\om_j(h(\al))=\ol{\nu^h}(h(\al_j))$ for any $h\in K[X]$.
\end{lem}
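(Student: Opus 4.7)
The plan is to exploit the defining property of the Henselization $(K^h, \nu^h)$, namely that $\nu^h$ admits a unique extension to every algebraic extension of $K^h$, given by the restriction of $\overline{\nu^h}$. First I would construct the $\omega_j$'s directly from the factorization $f = \prod_{j=1}^{t} f_j$ over $K^h$. Choosing a root $\alpha_j \in \overline{K^h}$ of each $f_j$, the relation $f(\alpha_j) = 0$ means that the assignment $\alpha \mapsto \alpha_j$ defines a well-defined $K$-algebra embedding $\sigma_j \colon L \to K^h(\alpha_j)$, since $h_1(\alpha) = h_2(\alpha)$ in $L$ forces $f \mid h_1 - h_2$ in $K[X]$ and hence $f_j \mid h_1 - h_2$ in $K^h[X]$. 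Setting $\omega_j = \overline{\nu^h} \circ \sigma_j$ produces a valuation on $L$ whose restriction to $K$ agrees with $\nu$, and by construction $\omega_j(h(\alpha)) = \overline{\nu^h}(h(\alpha_j))$ for every $h \in K[X]$.

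The core step is to show that every extension $\omega$ of $\nu$ to $L$ arises this way. Here I would invoke the isomorphism
\[
L \otimes_K K^h \;\cong\; K^h[X]/(f) \;\cong\; \prod_{j=1}^{t} K^h[X]/(f_j) \;\cong\; \prod_{j=1}^{t} K^h(\alpha_j),
\]
valid because the separability of $f$ makes the $f_j$'s pairwise coprime, so that the Chinese Remainder Theorem applies. Given an arbitrary extension $\omega$ of $\nu$ to $L$, one passes to a Henselization $(L^h, \omega^h)$ of $(L, \omega)$; its natural $K$-embedding into $\overline{K^h}$ (coming from the universal property of Henselizations) sends $\alpha$ to some root of $f$, and the minimal polynomial of that image over $K^h$ must be one of the $f_j$'s, so the embedding factors through $K^h(\alpha_j)$ for a unique $j$. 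Uniqueness of the extension of $\nu^h$ to $K^h(\alpha_j)$ then forces $\omega = \overline{\nu^h} \circ \sigma_j = \omega_j$.

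Distinctness of the $\omega_j$'s follows by a similar argument: if $\omega_i = \omega_j$, the two embeddings $\sigma_i, \sigma_j \colon L \to \overline{K^h}$ would extend to compatible $K^h$-embeddings of their Henselizations, identifying the irreducible $f_i$ with $f_j$ inside $K^h[X]$, forcing $i = j$. The main technical obstacle I anticipate is carrying out the Henselization step rigorously, i.e.\ realizing the Henselization of $(L,\omega)$ concretely as one of the factors $K^h(\alpha_j)$, and matching up the valuation coming from $\overline{\nu^h}$ with $\omega^h$. This is precisely where \emph{separability} of $f$ is essential, since it guarantees that $L \otimes_K K^h$ is \'etale over $K^h$ and therefore decomposes as the above product of fields rather than a more general Artinian ring; once this decomposition is in hand, the bijection and the explicit formula both fall out.
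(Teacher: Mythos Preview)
The paper does not give its own proof of this lemma; it is quoted as a well-known fact with a pointer to \cite[17.17]{End}. Your outline is essentially the standard argument one finds in such references: build each $\omega_j$ by transporting $\overline{\nu^h}$ along the $K$-embedding $L\hookrightarrow K^h(\alpha_j)$, and for surjectivity use that the Henselization of $(L,\omega)$ receives $K^h$ and is therefore (isomorphic to) one of the factors $K^h(\alpha_j)$ of $L\otimes_K K^h$, so that uniqueness of the extension of $\nu^h$ forces $\omega=\omega_j$.

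One small point of precision: the universal property of Henselization directly gives you an embedding of $K^h$ \emph{into} $L^h$, not an embedding of $L^h$ into $\overline{K^h}$. Once $K^h\subseteq L^h$, the compositum $K^h\!\cdot\! L=K^h(\alpha)$ is a finite extension of the Henselian field $K^h$, hence itself Henselian, and minimality of the Henselization then yields $L^h=K^h(\alpha)$; only at that stage do you embed $L^h$ into $\overline{K^h}$ over $K^h$ and read off which $f_j$ is the minimal polynomial of $\alpha$ over $K^h$. With that adjustment your argument is correct, and your remark about separability being needed to make $L\otimes_K K^h$ a product of fields (so that the $f_j$ are distinct and the bijection is clean) is exactly the role it plays in the cited reference.
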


\begin{center}
\begin{tikzpicture}[node distance = 1.5cm, auto]
      \node (K) {$(K,\nu)$};
      \node (Kh) [above of=K, left of=K] {$(K^h, \nu^h)$};
      \node (L) [above of=K, right of=K] {$\qquad \qquad\quad(L, \om_j); \, j\in\{1, \dots ,t\}$};
      \node (KK) [above of=K, node distance = 3cm] {$(\ol{K^h}, \ol{\nu^h})$};
      \draw[-] (K) to node {} (Kh);
      \draw[-] (K) to node {} (L);
      \draw[-] (Kh) to node {} (KK);
      \draw[-] (L) to node {} (KK);
      \end{tikzpicture}
\end{center}

The following result is a generalization of \cite[Lemm 2.1]{DE} to arbitrary-rank valuations.

\begin{lem}\label{lemma} Keep the notation and assumptions as in \textbf{Assump's} and Lemma \ref{Endler}.
\begin{itemize}
\item[(i)] For every $i=1, \dots, s$, $\om_j(\phi_i(\al))>0$ for some $j=1, \dots, t$.
\item[(ii)] For every $j=1, \dots, t$ and every nonzero $p\in R_\nu[X]$, $\om_j(p(\al))\geq \nu^{\mbox{\tiny G}}(p(X))$.
\item[(iii)] For every $j=1, \dots, t$, there exists a unique $i=1, \dots, s$ such that $\om_j(\phi_i(\al))>0$. Moreover, $\om_j(\phi_k(\al))=0$ for all $k\neq i$, $k=1, \dots, s$.
\item[(iv)] Equality holds in (ii) if and only if $\overline{\phi_i}$ does not divide $\overline{(p/a)}$ for the unique index $i$ associated to $\om_j$ in (iii), where $a$ is any coefficient of $p$ of minimum $\nu$-valuation.
\end{itemize}
\end{lem}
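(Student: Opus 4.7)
The plan is to exploit the Henselian property of $(K^h, \nu^h)$ to set up a dictionary $j \mapsto i(j)$ between the extensions $\om_j$ and the residue factors $\ol{\phi_i}$, then deduce all four assertions by reducing modulo the maximal ideal of $R_{\nu^h}$. Since $\ol{\phi_1}, \dots, \ol{\phi_s}$ are pairwise coprime monic irreducibles in $k_\nu[X]$, iterated Hensel lifting in $(K^h, \nu^h)$ produces a factorization $f = g_1 \cdots g_s$ in $R_{\nu^h}[X]$ with $\ol{g_i} = \ol{\phi_i}^{\, l_i}$. Decomposing each $g_i$ further into its monic irreducible factors in $K^h[X]$ recovers $f_1, \dots, f_t$; since in a Henselian field the reduction of a monic irreducible is a power of a single irreducible of the residue field, each $\ol{f_j}$ is a power of exactly one $\ol{\phi_{i(j)}}$, defining the desired map $j \mapsto i(j)$. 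Moreover, $f_j \in R_{\nu^h}[X]$ because the roots of $f$ (hence those of $f_j$) are integral over $R_\nu$, so for any root $\al_j$ of $f_j$ the residue $\ol{\al_j}$ is a root of $\ol{\phi_{i(j)}}$ in the residue field of $\ol{\nu^h}$.

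Parts (i) and (iii) then follow directly. For any $j$ and any $k \neq i(j)$, the polynomials $\ol{\phi_k}$ and $\ol{\phi_{i(j)}}$ are distinct monic irreducibles in $k_\nu[X]$, forcing $\ol{\phi_k}(\ol{\al_j}) \neq 0$ while $\ol{\phi_{i(j)}}(\ol{\al_j}) = 0$. By Lemma~\ref{Endler}, $\om_j(\phi_k(\al)) = \ol{\nu^h}(\phi_k(\al_j))$, which therefore equals $0$ for $k \neq i(j)$ and is strictly positive for $k = i(j)$; this gives (iii). Part (i) is immediate: every $\ol{\phi_i}$ divides $\ol{f} = \prod_j \ol{f_j}$, so $i$ lies in the image of $j \mapsto i(j)$, and for any such $j$ the previous computation yields $\om_j(\phi_i(\al)) > 0$.

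For (ii) and (iv), given a nonzero $p \in R_\nu[X]$, pick a coefficient $a$ of $p$ with $\nu(a) = \nu^{\mbox{\tiny G}}(p)$ and set $q := p/a$; then $q \in R_\nu[X]$ since every coefficient of $p$ has $\nu$-valuation at least $\nu(a)$. As $\al_j$ is integral over $R_{\nu^h}$, we have $\ol{\nu^h}(q(\al_j)) \geq 0$, and Lemma~\ref{Endler} gives
\[
\om_j(p(\al)) = \ol{\nu^h}(p(\al_j)) = \nu(a) + \ol{\nu^h}(q(\al_j)) \geq \nu(a) = \nu^{\mbox{\tiny G}}(p),
\]
which proves (ii). Equality holds precisely when $\ol{q}(\ol{\al_j}) \neq 0$, and since $\ol{\al_j}$ is a root of the irreducible polynomial $\ol{\phi_{i(j)}}$ of $k_\nu[X]$, this amounts to $\ol{\phi_{i(j)}} \nmid \ol{q}$, giving (iv). The main obstacle is the setup in the first paragraph: correctly invoking Hensel's lemma and identifying the factors $f_j$ for an arbitrary-rank valuation, after which parts (i)--(iv) reduce to straightforward residue-field computations.
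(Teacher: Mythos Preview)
Your proof is correct. The overall strategy---passing to $K^h$, identifying each $\ol{f_j}$ as a power of a single $\ol{\phi_{i(j)}}$, and then reading off all four statements from the residue $\ol{\al_j}$ via Lemma~\ref{Endler}---is sound and internally consistent.

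The paper's argument agrees with yours on (i) but diverges on (ii)--(iv): rather than transferring everything through $\ol{\nu^h}$ and a chosen root $\al_j$, the paper works directly inside $L$. For (ii) it invokes $S=\bigcap_j R_{\om_j}$ to place $p_1(\al)$ in each $R_{\om_j}$; for (iii) it lifts a B\'ezout identity $\ol{s_k}\,\ol{\phi_i}+\ol{t_k}\,\ol{\phi_k}=1$ from $k_\nu[X]$ and evaluates at $\al$ to force $\om_j(\phi_k(\al))=0$; for (iv) it introduces the ring map $\psi_j:k_\nu[X]\to R_{\om_j}/M_{\om_j}$, $\ol{p}\mapsto p(\al)+M_{\om_j}$, and identifies $\ker\psi_j=(\ol{\phi_i})$. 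Your route is more uniform---one passage to the Henselization handles everything---and yields (iii) with less manipulation, while the paper's intrinsic arguments in $L$ avoid repeatedly citing Lemma~\ref{Endler} and make the structure of $\ker\psi_j$ explicit, which is convenient for later use. Both approaches are equally valid for arbitrary-rank $\nu$.
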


\begin{proof}\hfill
\begin{itemize}
\item[(i)] 
    Since $k_{\nu^h}=k_\nu$, $\prod_{i=1}^s \ol{\phi_i}^{\,l_i} =\prod_{j=1}^t \ol{f_j}$. So, for a fixed $i=1, \dots, s$, there is some $j=1, \dots, t$ such that $\ol{\phi_i}$ divides $\ol{f_j}$. Since $f_j$ is irreducible, it follows from Hensel's Lemma that $\ol{f_j}=\ol{\phi_i}^{\,u_i}$ for some $1\leq u_i \leq l_i$. Let $\al_j\in \ol{K^h}$ be a root of $f_j$. As $f_j(\al_j)=0$, we have $\ol{\phi_i(\al_j)}^{\,u_j} = \ol{f_j(\al_j)} =\ol{0}$ modulo $M_{\ol{\nu^h}}$. Thus, $\phi_i(\al_j)^{u_i}\in M_{\ol{\nu^h}}$ and so $\phi_i(\al_j)\in M_{\ol{\nu^h}}$. Now, by Lemma \ref{Endler}, $\om_j(\phi_i(\al))=\ol{\nu^h}(\phi_i(\al_j))>0$ as desired.
\item[(ii)] Set $p_1=p/a$, where $a$ is a coefficient of $p$ of least $\nu$-valuation. As $\nu^{\mbox{\tiny G}}(p_1)=0$, $p_1 \in R_\nu[X]$. Since $S = \bigcap_{j=1}^t R_{\om_j}$ (see \cite[Corollary 3.1.4]{Eng}), it follows that, for every $j=1, \dots, t$, we have $p_1(\al)\in R_\nu[\al]\subseteq S\subseteq R_{\om_j}$ and $$\qquad \quad \om_j(p(\al))=\om_j(a)+\om_j(p_1(\al))=\nu(a)+\om_j(p_1(\al))=\nu^{\mbox{\tiny G}}(p(X))+\om_j(p_1(\al)) \geq \nu^{\mbox{\tiny G}}(p(X))$$ as claimed.
\item[(iii)] Fix a $j=1, \dots, t$. Since $\prod_{i=1}^s \phi_i(\al)^{l_i} \equiv f(\al) \equiv 0 \; (\mbox{mod} \; M_{\om_j})$, $\om_j(\prod_{i=1}^s \phi_i(\al)^{l_i}) >0$. So, $\om_j(\phi_i(\al)) >0$ (and so $\phi_i(\al)\in M_{\om_j}$) for some $i=1, \dots, s$. For $k=1, \dots, s$ with $k\neq i$, as $\ol{\phi_i}$ and $\ol{\phi_k}$ are coprime modulo $M_\nu$, we let $s_k, t_k \in R_\nu[X]$ be such that $\overline{s_k} \, \overline{\phi_i} + \overline{t_k} \, \overline{\phi}_k \equiv 1 \;(\mbox{mod} \;M_\nu)$. Then, $s_k(\al)\phi_i(\al) + t_k(\al)\phi_k(\al) = 1 +h(\al)$ for some $h \in M_\nu[X]$. As $\nu^{\mbox{\tiny G}}(h)>0$, it follows from part (ii) that $\om_j(h(\al))>0$ and so $h(\al) \in M_{\om_j}$. Since $\phi_i(\al)\in M_{\om_j}$ and $s_k(\al)\in R_\nu[\al]\subseteq S \subseteq R_{\om_j}$, $s_k(\al) \phi_i(\al))\in M_{\om_j}$. Thus, $\displaystyle{t_k(\al)\phi_k(\al)\in R_{\om_j} - M_{\om_j}}$. So, $\om_j(t_k(\al)\phi_k(\al))=0$ and thus $\om_j(\phi_k(\al)) =0$. The uniqueness of $i$ such that $\om_j(\phi_i(\al))>0$ follows.
\item[(iv)] Define the map $\textcolor[rgb]{0.00,0.07,1.00}{\psi_j}: k_\nu[X] \to R_{\om_j}/M_{\om_j}$ by $\overline{p}(X) \mapsto p(\al)+ M_{\om_j}$. Since $M_\nu \subseteq M_{\om_j}$, $\psi_j$ is a well-defined ring homomorphism. As $\om_j(p(\al))=\nu^{\mbox{\tiny G}}(p(X)) +\om_j(p_1(\al))$ (see part (ii)), it follows that $\om_j(p(\al))=\nu^{\mbox{\tiny G}}(p(X))$ if and only if $\om_j(p_1(\al))=0$, if and only if $p_1(\al)\in R_{\om_j} - M_{\om_j}$, if and only if $\overline{p_1}(X) \not\in \mbox{ker}\,\psi_j$. By part (iii), let $\phi_i$ be such that $\om_j(\phi_i(\al))>0$. Then, $\phi_i(\al)\in M_{\om_j}$ and so $\overline{\phi_i}\in \mbox{ker}\,\psi_j$. Since $\mbox{ker}\,\psi_j$ is a principal ideal of $k_\nu[X]$ and $\overline{\phi_i}$ is irreducible over $k_\nu$, $\mbox{ker}\,\psi_j$ is generated by $\overline{\phi_i}$. It now follows that $\om_j(p(\al))=\nu^{\mbox{\tiny G}}(p)$ if and only if $\overline{\phi_i}$ does not divide $\overline{p_1}$.
\end{itemize}
\end{proof}



Keeping the notation of \textbf{Assump's}, in what follows we let $q_i, r_i\in R_\nu[X]$ be, respectively, the quotient and the remainder upon the Euclidean division of $f$ by $\phi_i$, for $i=1, \dots, s$.

In \cite[lemma 2.1 (b)]{KK1}, it was shown that $\Ga_\nu^+$ contains a smallest element in case $R_\nu[\al]$ is integrally closed and $l_i\geq 2$ for some $i=1, \dots, s$. Below, we prove this fact differently with some thing extra.

\begin{lem}\label{inf}
Keep the notation and assumptions as in Lemma \ref{lemma}. If $R_\nu[\al]$ is integrally closed and $I=\{i\,|\, l_i\geq 2,\, i=1, \dots, s \}$ is not empty, then $\Ga_\nu^+$ has a minimum element with $\min(\Ga_\nu^+)=\nu^{\mbox{\tiny G}}(r_i)$ for every $i\in I$.
\end{lem}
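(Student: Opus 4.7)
The plan is to pin down $\nu^{\mbox{\tiny G}}(r_i)$ as a valuation arising from a specific algebraic relation and then leverage integral closedness to show it is the minimum of $\Gamma_\nu^+$.

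First I would establish the preparatory facts. Since $\bar{\phi_i}^{l_i}\mid\bar f$ with $l_i\ge 2$, reducing the Euclidean relation $f=\phi_iq_i+r_i$ modulo $M_\nu$ forces $\bar{\phi_i}\mid\bar{r_i}$; the degree bound $\deg r_i<\deg\phi_i$ then forces $\bar{r_i}=0$, so $r_i\in M_\nu[X]$ and $\nu^{\mbox{\tiny G}}(r_i)\in\Gamma_\nu^+$. The same computation gives $\bar{q_i}=\bar{\phi_i}^{\,l_i-1}\prod_{k\neq i}\bar{\phi_k}^{\,l_k}$, divisible by $\bar{\phi_i}$.

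Next, I would extract the crucial equality. By Lemma \ref{lemma}(i) pick $j$ with $\omega_j(\phi_i(\alpha))>0$ (so $i=i(j)$ in the notation of Lemma \ref{lemma}(iii)). Let $a$ be a coefficient of $r_i$ of minimal $\nu$-value. Then $\overline{r_i/a}$ is a nonzero polynomial of degree strictly less than $\deg\bar{\phi_i}$, so $\bar{\phi_i}\nmid\overline{r_i/a}$, and Lemma \ref{lemma}(iv) yields $\omega_j(r_i(\alpha))=\nu^{\mbox{\tiny G}}(r_i)$. The same lemma applied to the monic polynomial $q_i$ gives $\omega_j(q_i(\alpha))>\nu^{\mbox{\tiny G}}(q_i)=0$. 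Combining with $r_i(\alpha)=-\phi_i(\alpha)q_i(\alpha)$ (from $f(\alpha)=0$) produces
\[
\nu^{\mbox{\tiny G}}(r_i)=\omega_j(\phi_i(\alpha))+\omega_j(q_i(\alpha)),
\]
with both summands strictly positive.

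The heart of the argument is then to show $\nu^{\mbox{\tiny G}}(r_i)=\min(\Gamma_\nu^+)$. Arguing by contradiction, suppose $\pi_1\in K^*$ satisfies $0<\nu(\pi_1)<\nu^{\mbox{\tiny G}}(r_i)$. Since every coefficient of $r_i$ has $\nu$-value at least $\nu^{\mbox{\tiny G}}(r_i)>\nu(\pi_1)$, the polynomial $r_i/\pi_1$ lies in $R_\nu[X]$, hence $-r_i(\alpha)/\pi_1=\phi_i(\alpha)q_i(\alpha)/\pi_1$ lies in $R_\nu[\alpha]=S$. The goal is to show that the element $\eta:=q_i(\alpha)/\pi_1\in L$ itself belongs to $S$: for any $j'$ with $i(j')\neq i$ we have $\omega_{j'}(\phi_i(\alpha))=0$, so $\omega_{j'}(\eta)=\omega_{j'}(\phi_i(\alpha)q_i(\alpha)/\pi_1)\geq\nu^{\mbox{\tiny G}}(r_i)-\nu(\pi_1)>0$. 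Once $\eta\in S=R_\nu[\alpha]$ is known, the uniqueness of representatives of degree $<n=\deg f$ forces $q_i/\pi_1\in R_\nu[X]$, which is absurd since $q_i$ is monic while $\nu(\pi_1)>0$.

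The main obstacle is therefore to verify $\omega_j(q_i(\alpha))\geq\nu(\pi_1)$ at the remaining indices $j$ with $i(j)=i$. I would attack this by exploiting the $\phi_i$-adic expansion $q_i=\sum_k a_k\phi_i^k$ with $\deg a_k<\deg\phi_i$: Lemma \ref{lemma}(iv) gives $\omega_j(a_k(\alpha))=\nu^{\mbox{\tiny G}}(a_k)$ for each $k$, and the factorization $\bar{q_i}=\bar{\phi_i}^{\,l_i-1}\prod_{k'\neq i}\bar{\phi_{k'}}^{\,l_{k'}}$ forces $\nu^{\mbox{\tiny G}}(a_k)>0$ for $k<l_i-1$ and $\nu^{\mbox{\tiny G}}(a_{l_i-1})=0$. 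Reading off the $\phi_i$-Newton polygon of $q_i$ at $\alpha$ should pin down $\omega_j(q_i(\alpha))$ in terms of $(l_i-1)\omega_j(\phi_i(\alpha))$ and the $\nu^{\mbox{\tiny G}}(a_k)$'s, ultimately producing the needed lower bound and contradicting the existence of $\pi_1$.
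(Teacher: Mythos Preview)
Your overall strategy matches the paper's: take $\eta=q_i(\alpha)/\pi_1$, show it lies in every $R_{\omega}$ and hence in $S$, then observe that $q_i$ is monic so $\eta\notin R_\nu[\alpha]$. The preparatory facts and the treatment of the valuations $\omega_{j'}$ with $i(j')\neq i$ are correct and essentially identical to the paper's.

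The gap is in the last paragraph. For an \emph{arbitrary} $\pi_1$ with $0<\nu(\pi_1)<\nu^{\mbox{\tiny G}}(r_i)$, the inequality $\omega_j(q_i(\alpha))\ge \nu(\pi_1)$ need not hold, and the Newton-polygon sketch does not supply it. From the $\phi_i$-adic expansion you only get $\omega_j(q_i(\alpha))\ge \min_k\{\nu^{\mbox{\tiny G}}(a_k)+k\,\omega_j(\phi_i(\alpha))\}$, and combining with the identity $\omega_j(q_i(\alpha))+\omega_j(\phi_i(\alpha))=\nu^{\mbox{\tiny G}}(r_i)$ yields at best bounds like $(l_i-1)\nu^{\mbox{\tiny G}}(r_i)/l_i$ or $\nu^{\mbox{\tiny G}}(a_0)$, neither of which dominates an arbitrary $\nu(\pi_1)$ close to $\nu^{\mbox{\tiny G}}(r_i)$. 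Concretely, if $\omega_j(\phi_i(\alpha))$ happens to be large (close to $\nu^{\mbox{\tiny G}}(r_i)$), then $\omega_j(q_i(\alpha))$ is small, and $\eta\notin R_{\omega_j}$ for your choice of $\pi_1$.

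The missing idea is that $\pi_1$ must be chosen more carefully. Starting from a witness $\tau<\nu^{\mbox{\tiny G}}(r_i)$, the paper replaces $\pi_1$ by an element of value
\[
\delta_i=\min\{\tau,\ \nu^{\mbox{\tiny G}}(r_i)-\tau,\ \nu^{\mbox{\tiny G}}(r_i^*)\},
\]
where $r_i^*$ is the remainder of $q_i$ upon division by $\phi_i$ (your $a_0$). One then splits into two cases. If $\omega_j(\phi_i(\alpha))>\delta_i$, the one-step expansion $q_i=q_i^*\phi_i+r_i^*$ already gives $\omega_j(q_i(\alpha))\ge\min\{\omega_j(\phi_i(\alpha)),\nu^{\mbox{\tiny G}}(r_i^*)\}\ge\delta_i$. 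If $\omega_j(\phi_i(\alpha))\le\delta_i$, one uses $\eta=-r_i(\alpha)/(\pi_1\phi_i(\alpha))$ directly to get $\omega_j(\eta)\ge\nu^{\mbox{\tiny G}}(r_i)-2\delta_i\ge 0$, the last inequality being exactly why the term $\nu^{\mbox{\tiny G}}(r_i)-\tau$ was included in the minimum. Your Newton-polygon heuristic is pointing in the right direction, but without shrinking $\pi_1$ in this way the argument does not close.
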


\begin{proof}
For $i\in I$, let $q_i^*, r_i^*\in R_\nu[X]$ be, respectively, the quotient and remainder upon the Euclidean division of $q_i$ by $\phi_i$. Since $\ol{\phi_i}$ divides both $\ol{f}$ and $\ol{q_i}\ol{\phi_i}$, $\ol{\phi_i}$ divides $\ol{r_i}$. But, as $\phi_i$ is monic, $\mbox{deg}(\ol{\phi_i})=\mbox{deg}(\phi_i)> \mbox{deg}(r_i) \geq \mbox{deg}(\ol{r_i})$. This implies that $\ol{r_i}$ is zero and so $\nu^{\mbox{\tiny G}}(r_i)>0$. Thus, $\nu^{\mbox{\tiny G}}(r_i)\in \Ga_\nu^+$. Now as $\ol{f}=\ol{q_i}\ol{\phi_i}$ and $\ol{\phi_i}^2$ divides $\ol{f}$, $\ol{\phi_i}$ must divide $\ol{q_i}$. Applying a similar argument to the expression $\ol{q_i}=\ol{q_i^*}\,\ol{\phi_i}+\ol{r_i^*}$, we get that $\ol{r_i^*}$ is zero. So, $\nu^{\mbox{\tiny G}}(r_i^*)>0$ and, thus, $\nu^{\mbox{\tiny G}}(r_i^*)\in \Ga_\nu^+$. To the contrary, suppose that $\tau_i\in \Ga_\nu^+$ is such that $\tau_i < \nu^{\mbox{\tiny G}}(r_i)$, and set $\d_i=\min \{\tau_i, \nu^{\mbox{\tiny G}}(r_i)-\tau_i, \nu^{\mbox{\tiny G}}(r_i^*)\}$. As $\d_i\in \Ga_\nu^+$, let $d_i\in R_\nu$ be such that $\nu(d_i)=\d_i$ and set $\t_i=q_i(\al)/d_i$. Let $\om$ be a valuation of $L$ extending $\nu$. We show that $\t_i\in R_\om$ and, since $\om$ is arbitrary, it would follow that $\t_i \in S$ (\cite[Corollary 3.1.4]{Eng}). As $f(\al)=0$, $\t_i=-r_i(\al)/(d_i \phi_i(\al))$. By Lemma \ref{lemma}, let $j\in \{1, \dots, s\}$ be the unique index such that $\om(\phi_j(\al))>0$ and $\om(\phi_k(\al))=0$ for all $k\in \{1, \dots, s\}-\{j\}$. If $i\neq j$, then $\om(\phi_i(\al))=0$ and $$\om(\t_i)=\om(r_i(\al))-\om(d_i)=\om(r_i(\al))-\nu(d_i)\geq \nu^{\mbox{\tiny G}}(r_i)-\d_i>\d_i-\d_i=0,$$ and so $\t_i\in R_\om$ in this case. Assume, on the other hand, that $i=j$. If $\om(\phi_i(\al))> \d_i$, then as $q_i^*$ is monic and $\om(q_i^*(\al))\geq \nu^{\mbox{\tiny G}}(q_i^*)=0$ (Lemma \ref{lemma}), we have
$$\om(q_i(\al))\geq \min\{\om(q_i^*(\al)+\phi_i(\al)), \om(r_i^*(\al))\} \geq \min\{\om(\phi_i(\al)), \nu^{\mbox{\tiny G}}(r_i^*)\} \geq \d_i.$$
So, $\om(\t_i)=\om(q_i(\al))-\om(d_i)>\d_i -\d_i=0$, which implies that $\t_i\in R_\om$ in this case too. If, on the other hand, $\om(\phi_i(\al))\leq \d_i$, then
$$\om(\t_i)=\om(r_i(\al))-\om(d_i)-\om(\phi_i(\al))\geq \nu^{\mbox{\tiny G}}(r_i)-\d_i -\d_i\geq \nu^{\mbox{\tiny G}}(r_i)-\tau_i-\d_i\geq \d_i-\d_i=0.$$
So, $\t_i\in R_\om$ in this case as well. It now follows from the above argument that $\t_i\in S$. But, as $q_i$ is monic and $1/d_i\not\in R_\nu$, it is clear that $\t_i \not\in R_\nu[\al]$, contradicting the assumption that $R_\nu[\al]$ is integrally closed. Hence, $\nu^{\mbox{\tiny G}}(r_i)$ is the minimal element of $\Ga_\nu^+$ as claimed.
\end{proof}

\begin{lem}\label{sigma}
Keep the notation and assumptions as in Lemma \ref{lemma}. If $\min(\Ga_\nu^+)=\s$, then for any $i\in\{1, \dots, s\}$ with $\nu^{\mbox{\tiny G}}(r_i)=\s$, and for any valuation $\om$ of $L$ extending $\nu$ such that $\om(\phi_i(\al))>0$, we have $\om(\phi_i(\al))=\s/l_i$.
\end{lem}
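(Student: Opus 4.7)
The plan is to set $\gamma := \omega(\phi_i(\al))$ and derive $l_i\gamma = \sigma$ by applying $\omega$ to both sides of a rewriting of $f(\al)=0$ arranged so that the ultrametric inequality is \emph{strict}. Two ingredients feed into this: pinning down $\omega(r_i(\al))$ exactly, and decomposing $q_i$ so as to isolate the product $\prod_k \phi_k(\al)^{l_k}$.

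For the first ingredient, I would invoke Lemma \ref{lemma}(iv). Letting $a_i$ be any coefficient of $r_i$ with $\nu(a_i) = \nu^{\mbox{\tiny G}}(r_i) = \sigma$, the reduction $\overline{r_i/a_i}\in k_\nu[X]$ is nonzero (its coefficient at the position of $a_i$ reduces to $1$) and has degree $\leq \deg(r_i) < \deg(\phi_i) = \deg(\ol{\phi_i})$, so $\ol{\phi_i}$ cannot divide it. Lemma \ref{lemma}(iv) therefore gives $\omega(r_i(\al)) = \nu^{\mbox{\tiny G}}(r_i) = \sigma$.

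For the second, as in the proof of Lemma \ref{inf} we have $\ol{r_i} = 0$, hence $\ol{q_i}\,\ol{\phi_i} = \ol{f} = \prod_k \ol{\phi_k}^{\,l_k}$; coprimality of the distinct $\ol{\phi_k}$'s then forces $\ol{q_i} = \ol{\phi_i}^{\,l_i-1}\prod_{k\neq i}\ol{\phi_k}^{\,l_k}$. I would therefore introduce the canonical monic lift $u_i := \phi_i^{\,l_i-1}\prod_{k\neq i}\phi_k^{\,l_k} \in R_\nu[X]$, which has the same degree as $q_i$, so that $q_i - u_i \in M_\nu[X]$ and consequently $\omega((q_i - u_i)(\al)) \geq \nu^{\mbox{\tiny G}}(q_i - u_i) \geq \sigma$ by Lemma \ref{lemma}(ii).

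With these in hand, I would rewrite $0 = f(\al) = q_i(\al)\phi_i(\al) + r_i(\al)$ as
\[
\phi_i(\al)^{l_i}\prod_{k\neq i}\phi_k(\al)^{l_k} \;=\; u_i(\al)\,\phi_i(\al) \;=\; -\,r_i(\al) \;-\; (q_i - u_i)(\al)\,\phi_i(\al).
\]
Lemma \ref{lemma}(iii) gives the left-hand side $\omega$-value exactly $l_i\gamma$, while the right-hand side is a sum of two terms with $\omega$-values $\sigma$ and $\geq \sigma + \gamma$; since $\gamma > 0$ these values are distinct, so the strict ultrametric law pins the right-hand side to value $\sigma$, yielding $l_i\gamma = \sigma$. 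The delicate step is exactly this splitting $q_i = u_i + (q_i - u_i)$: the naive approach via $f \equiv \prod_k \phi_k^{\,l_k} \pmod{M_\nu}$ gives only the weaker bound $l_i\gamma \geq \sigma$, and what pushes it to equality is that the ``error'' $(q_i - u_i)(\al)\phi_i(\al)$ has \emph{strictly} larger $\omega$-value than $r_i(\al)$.
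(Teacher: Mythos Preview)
Your proof is correct and follows essentially the same route as the paper's. The paper writes $f = m_i\phi_i^{l_i} + n_i\phi_i + r_i$ with $\nu^{\mbox{\tiny G}}(m_i)=0$, $\ol{\phi_i}\nmid\ol{m_i}$, and $\nu^{\mbox{\tiny G}}(n_i)>0$, then uses Lemma~\ref{lemma}(iv) to get $\om(m_i(\al))=0$ and concludes $l_i\om(\phi_i(\al))=\om(n_i(\al)\phi_i(\al)+r_i(\al))=\s$; your explicit choice $u_i=\phi_i^{\,l_i-1}\prod_{k\neq i}\phi_k^{\,l_k}$ amounts to taking $m_i=\prod_{k\neq i}\phi_k^{\,l_k}$ (so that Lemma~\ref{lemma}(iii) replaces the appeal to (iv) for this term) and $n_i=q_i-u_i$, but the structural argument is identical.
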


\begin{proof}
Let $i\in \{1, \dots ,s\}$ and $\om$ be a valuation of $L$ extending $\nu$ such that $\om(\phi_i(\al))>0$. Write $f$ in the form $f=m_i\phi_i^{l_i}+ n_i \phi_i+ r_i$, with $m_i, n_i\in R_\nu[X]$, $\nu^{\mbox{\tiny G}}(m_i)=0$, $\ol{\phi_i}$ does not divide $\ol{m_i}$, $\nu^{\mbox{\tiny G}}(n_i)>0$, and $\mbox{deg}(r_i)<\mbox{deg}(\phi_i)$. Notice that if $l_i=1$, then $m_i=q_i$ and $n_i=0$. By Lemma \ref{lemma}, $\om(n_i(\al))\geq \nu^{\mbox{\tiny G}}(n_i)\geq \s$, $\om(m_i(\al))=\nu^{\mbox{\tiny G}}(m_i)=0$, and $\om(r_i(\al))=\nu^{\mbox{\tiny G}}(r_i)=\s$ as $\ol{\phi_i}$ divides neither $\ol{m_i}$ nor $\ol{r_i}$. We then have
$$l_i\om(\phi_i(\al))=\om(m_i(\al)\phi_i^{l_i}(\al))=\om(n_i(\al)\phi_i(\al)+r_i(\al))=\om(r_i(\al))=\nu^{\mbox{\tiny G}}(r_i)=\s$$ as claimed.
\end{proof}


Now we get to our first main result, which computationally enhances \cite[Theorem 1.1]{KK1} and, further, improves \cite[Theorem 4.1]{KK1} in the sense that $K$ is not assumed to be Henselian.

\begin{thm}\label{main}
Keep the notation and assumptions as in Lemma \ref{lemma}.
\begin{itemize}
\item[(i)] If $l_i=1$ for all $i=1, \cdots, s$, then $R_\nu[\al]$ is integrally closed.

\item[(ii)] If $I=\{i\,|\, l_i\geq 2,\, i=1, \dots, s \}$ is not empty, then $R_\nu[\al]$ is integrally closed if and only if $\nu^{\mbox{\tiny G}}(r_i)=\min(\Ga_\nu^+)$ for every $i\in I$.
\end{itemize}
\end{thm}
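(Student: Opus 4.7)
My plan is to treat part (i) and the two directions of (ii) by a uniform residue-divisibility analysis applied to a hypothetical element of $S\setminus R_\nu[\al]$. The ``only if'' direction of (ii) is immediate from Lemma \ref{inf}, which already shows that integral closedness of $R_\nu[\al]$ together with $I\neq\emptyset$ produces $\min(\Ga_\nu^+)=\nu^{\mbox{\tiny G}}(r_i)$ for every $i\in I$. Thus only part (i) and the ``if'' direction of (ii) need new argument, and I will develop them from a common setup.

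Suppose for contradiction that $\t\in S\setminus R_\nu[\al]$. Using that $1,\al,\ldots,\al^{n-1}$ (with $n=\deg f$) is an $R_\nu$-basis of $R_\nu[\al]$, write $\t=g(\al)/d$ with $g=\sum c_k X^k\in R_\nu[X]$ of degree less than $n$ and $d\in R_\nu$; the assumption $\t\notin R_\nu[\al]$ forces $\nu(a)<\nu(d)$, where $a$ is any coefficient of $g$ of minimum $\nu$-valuation. Set $g_1=g/a\in R_\nu[X]$, so $\nu^{\mbox{\tiny G}}(g_1)=0$. For every extension $\om_j$ of $\nu$ to $L$, the inclusion $S\subseteq R_{\om_j}$ combined with Lemma \ref{lemma}(iv) gives $\om_j(g_1(\al))\ge\nu(d)-\nu(a)>0$, and the equality clause of Lemma \ref{lemma}(iv) turns this strict inequality into $\overline{\phi_{i_j}}\mid\overline{g_1}$, where $i_j$ is the unique index associated to $\om_j$ by Lemma \ref{lemma}(iii). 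Since Lemma \ref{lemma}(i) makes $j\mapsto i_j$ surjective and the $\overline{\phi_i}$ are pairwise coprime irreducibles, $\prod_{i=1}^s\overline{\phi_i}$ divides $\overline{g_1}$ in $k_\nu[X]$. In case (i), where all $l_i=1$, this product is $\overline{f}$ itself, of degree $n$, contradicting $0\neq\overline{g_1}$ with $\deg\overline{g_1}<n$; this finishes (i).

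For the ``if'' direction of (ii), the hypothesis $\nu^{\mbox{\tiny G}}(r_i)=\min(\Ga_\nu^+)=:\s$ for all $i\in I$, combined with the observation that $\nu(d)-\nu(a)$ is a positive element of $\Ga_\nu$, strengthens the bound to $\om_j(g_1(\al))\ge\s$. Let $m_i\ge1$ be the largest integer with $\overline{\phi_i}^{\,m_i}\mid\overline{g_1}$; the degree comparison $\deg\overline{g_1}<n=\sum_i l_i\deg\overline{\phi_i}$ prevents $m_i\ge l_i$ from holding simultaneously for all $i$, forcing some $i^*$ with $1\le m_{i^*}<l_{i^*}$ and hence $i^*\in I$. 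Write $g_1=\phi_{i^*}^{\,m_{i^*}}u+w$ with $u\in R_\nu[X]$ satisfying $\nu^{\mbox{\tiny G}}(u)=0$ and $\overline{\phi_{i^*}}\nmid\overline{u}$, and $w\in M_\nu[X]$. Fix any $j^*$ with $i_{j^*}=i^*$ and compute term by term: Lemma \ref{lemma}(iv) gives $\om_{j^*}(u(\al))=0$; Lemma \ref{sigma}, whose hypothesis $\nu^{\mbox{\tiny G}}(r_{i^*})=\s$ holds precisely because $i^*\in I$, gives $\om_{j^*}(\phi_{i^*}(\al))=\s/l_{i^*}$; and $\om_{j^*}(w(\al))\ge\nu^{\mbox{\tiny G}}(w)\ge\s$. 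Since $m_{i^*}\s/l_{i^*}<\s$, the two summands of $g_1(\al)$ have distinct $\om_{j^*}$-values, so $\om_{j^*}(g_1(\al))=m_{i^*}\s/l_{i^*}<\s$, contradicting the lower bound $\om_{j^*}(g_1(\al))\ge\s$.

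The most delicate step is this last one: converting the residue-level inequality $m_{i^*}<l_{i^*}$ into the valuation-level strict inequality $\om_{j^*}(g_1(\al))<\s$ by invoking Lemma \ref{sigma} for the exact value of $\om_{j^*}(\phi_{i^*}(\al))$, and then relying on the distinctness of the two $\om_{j^*}$-values in the decomposition $g_1=\phi_{i^*}^{\,m_{i^*}}u+w$ to turn the non-archimedean inequality into an equality. The availability of Lemma \ref{sigma} for the particular index $i^*$ is the crucial point: it is guaranteed exactly by the theorem's hypothesis $\nu^{\mbox{\tiny G}}(r_i)=\s$ for $i\in I$.
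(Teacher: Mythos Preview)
Your proof is correct and follows the same overall strategy as the paper: for a putative $\t\in S\setminus R_\nu[\al]$, write $\t=g(\al)/d$, study the divisibility of $\overline{g/a}$ by powers of the $\overline{\phi_i}$, and invoke Lemma~\ref{sigma} to compute $\om(\phi_{i^*}(\al))$ for a suitable extension $\om$ of $\nu$, reaching a contradiction via $m_{i^*}\s/l_{i^*}<\s$.

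The one noteworthy difference is organizational. By first proving, via Lemma~\ref{lemma}(i),(iii),(iv) applied to \emph{every} extension $\om_j$, that $\prod_{i=1}^s\overline{\phi_i}$ divides $\overline{g_1}$, you simultaneously dispose of part (i) and guarantee $m_i\ge1$ for all $i$; this forces the critical index $i^*$ (the one with $m_{i^*}<l_{i^*}$) to lie in $I$, so the hypothesis $\nu^{\mbox{\tiny G}}(r_{i^*})=\s$ needed for Lemma~\ref{sigma} is available. The paper, by contrast, allows $m_i=0$ and hence cannot conclude $i^*\in I$; it therefore inserts a preliminary step in which the liftings $\phi_i$ for $i\notin I$ are replaced by $\phi_i+\pi$ so as to force $\nu^{\mbox{\tiny G}}(r_i)=\s$ for \emph{all} $i$, making Lemma~\ref{sigma} applicable regardless of which index is selected. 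Your unification of (i) and (ii) through the common divisibility conclusion is a genuine streamlining that renders this lifting adjustment unnecessary.
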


\begin{proof}$\hfill$
\begin{itemize}
\item[(i)] Assume that $l_i=1$ for all $i=1, \dots, s$. An arbitrary element of $S$ is of the form \linebreak $\t=h(\al)/b$ for some $b\in R_\nu$ and $h\in R_\nu[X]$, with $\nu^{\mbox{\tiny G}}(h)=0$ and $\displaystyle{\mbox{deg}(h)<\mbox{deg}(f)}$. Since $f$ is monic, $\mbox{deg}(\ol{h})\leq \mbox{deg}(h)<\mbox{deg}(f)=\mbox{deg}(\ol{f})$. As $l_i=1$ for all \linebreak $i=1,\dots, s$, there is some $i=1, \dots, s$ such that $\ol{\phi_i}$ does not divide $\ol{h}$. For such a fixed $i$, let $\om$ be a valuation of $L$ extending $\nu$ such that $\om(\phi_i(\al))>0$, according to Lemma \ref{lemma}. By Lemma \ref{lemma} again, $\om(h(\al))=\nu^{\mbox{\tiny G}}(h)=0$. If $\nu(b)>0$, then $\displaystyle{\om(\t)=\om(h(\al))-\om(b)=0-\nu(b)<0}$. Thus, $\t \not\in S$, which is a contradiction. Hence, $\nu(b)=0$, which implies that $\t \in R_\nu[\al]$. This shows that $S=R_\nu[\al]$ and, hence, $R_\nu[\al]$ is integrally closed.
\item[(ii)] Assume that $I\neq \varnothing$. If $R_\nu[\al]$ is integrally closed, then it follows from Lemma \ref{inf} that $\nu^{\mbox{\tiny G}}(r_i)$ is the minimum element of $\Ga_\nu^+$ for every $i\in I$, as claimed.

    Conversely, set $\min(\Ga_\nu^+)=\s$ and let $\pi \in R_\nu$ be such that $\nu(\pi)=\s$. Assume that, for every $i\in I$, $\nu^{\mbox{\tiny G}}(r_i)=\s$. We aim at proving that $R_\nu[\al]$ is integrally closed. By making an appropriate choice of a lifting of $\ol{\phi_i}$, we begin by showing that we can also assume that $\nu^{\mbox{\tiny G}}(r_i)=\s$ for $i\not\in I$. Let $i\not\in I$, and assume that $\nu^{\mbox{\tiny G}}(r_i)>\s$. If $\d \in \Ga_\nu^+$ with $\s<\d<2\s$, then $\d-\s\in \Ga_\nu^+$ with $\d-\s<2\s-\s=\s$ contradicting the minimality of $\s$. So there is no element of $\Ga_\nu^+$ lying strictly between $\s$ and $2\s$. So, $\nu^{\mbox{\tiny G}}(r_i)\geq 2\s$. Let $q_i^*, r_i^*\in R_\nu[X]$ be, respectively, the quotient and remainder upon the Euclidean division of $q_i$ by $\phi_i$. Set $\phi_i^{**}=\phi_i +\pi$, $q_i^{**}=q_i-\pi q_i^*$, and $r_i^{**} =r_i-\pi r_i^*+\pi^2 q_i^*$. Then we have
\begin{align*}
q_i^{**}\phi_i^{**}+r_i^{**} &=(q_i-\pi q_i^*)(\phi_i+\pi) +r_i-\pi r_i^* +\pi^2 q_i^*\\
                   &=q_i\phi_i+r_i\\
                   &= f.
\end{align*}
It can be easily checked that $q_i^{**}$ and $r_i^{**}$ are, respectively, the quotient and remainder upon the Euclidean division of $f$ by $\phi_i^{**}$. Since $l_i=1$, $\ol{r_i^*}$ is nonzero and so \linebreak $\nu^{\mbox{\tiny G}}(\pi r_i^*)=\nu(\pi)=\s$. As $\nu^{\mbox{\tiny G}}(r_i) \geq 2\s$ and $\nu^{\mbox{\tiny G}}(\pi^2 q_i^*) \geq \nu(\pi^2)= 2\s$, it follows that $\nu^{\mbox{\tiny G}}(r_i^{**})=\nu^{\mbox{\tiny G}}(\pi r_i^*)=\s$. So, replacing $\phi_i$ by $\phi_i+\pi$, we can assume that $\nu^{\mbox{\tiny G}}(r_i)=\s$. We thus assume in the remainder of the proof that $\nu^{\mbox{\tiny G}}(r_i)=\s$ for all $i=1, \dots, s$. We finally get to proving that $R_\nu[\al]$ is integrally closed. Assume, to the contrary, that there exists some $\t \in S-R_\nu[\al]$. Then $\t$ can be written in the form $\t=g(\al)/b$ for some $b\in R_\nu$ and $g\in R_\nu[X]$ with $\nu(b)\geq \s$, $\nu^{\mbox{\tiny G}}(g)=0$, and $\mbox{deg}(g)<\mbox{deg}(f)$. For each $i=1, \dots, s$, let $m_i\geq 0$ be the highest power of $\ol{\phi_i}$ that divides $\ol{g}$. Since $\mbox{deg}(g)< \mbox{deg}(f)$, there must exist some $i=1, \dots, s$ such that $m_i\leq l_i-1$. For such an $i$, apply the Euclidean division of $g$ by $\phi_i^{m_i}$ to get $g=S_i\phi_i^{m_i}+T_i$, where $S_i, T_i\in R_\nu[X]$, $\ol{\phi_i}$ does not divide $\ol{S_i}$, and $\nu^{\mbox{\tiny G}}(T_i)\geq \s$. By Lemma \ref{lemma}, let $\om$ be a valuation of $L$ extending $\nu$ such that $\om(\phi_i(\al))>0$. Since $\ol{\phi_i}$ does not divide $\ol{S_i}$ and $S_i$ is monic, it follows from Lemma \ref{lemma} that $\om(S_i(\al))=\nu^{\mbox{\tiny G}}(S_i)=0$. Using Lemma \ref{sigma}, we then have $$\om(S_i(\al)\phi_i(\al)^{m_i})=m_i\om(\phi_i(\al))=m_i \s/l_i.$$ Since $\om(T_i(\al))\geq \nu^{\mbox{\tiny G}}(T_i)\geq \s$ (by Lemma \ref{lemma}), it follows that $$\om(g(\al))=\min\{\om(S_i(\al)\phi_i(\al)^{m_i}), \om(T_i(\al))\}=\min\{m_i \s/l_i, \s\}=m_i \s/l_i <\s.$$ Thus, $\om(\t)=\om(g(\al))-\om(b)=\om(g(\al))-\nu(b)<\s-\s=0$. Hence, $\t\not\in R_\om$ and, hence, $\t\not\in S$. This contradiction leads to the conclusion that $S=R_\nu[\al]$, as desired.
\end{itemize}
\end{proof}

The following corollary is immediate.

\begin{cor}
Keep the assumptions of Theorem \ref{main}. If $\mbox{$\Ga_\nu^+$}$ does not have a minimum element, then $R_\nu[\al]$ is integrally closed if and only if $l_i=1$ for every $i=1, \dots, s$.
\end{cor}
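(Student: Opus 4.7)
The corollary is a direct consequence of Theorem \ref{main}, so the proof will be very short; the plan is essentially to contrapose part (ii) of that theorem.

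First, for the backward direction, if $l_i=1$ for all $i=1,\dots,s$, then Theorem \ref{main}(i) immediately yields that $R_\nu[\al]$ is integrally closed, with no hypothesis needed on $\Ga_\nu^+$.

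For the forward direction, I would argue by contraposition. Suppose $l_i \geq 2$ for some $i$, so the set $I=\{i\mid l_i\geq 2,\ i=1,\dots,s\}$ is nonempty. If $R_\nu[\al]$ were integrally closed, Theorem \ref{main}(ii) would force $\nu^{\mbox{\tiny G}}(r_i)=\min(\Ga_\nu^+)$ for every $i\in I$; in particular, $\Ga_\nu^+$ would possess a minimum element, contradicting the standing hypothesis. Hence $R_\nu[\al]$ is not integrally closed in this case, which establishes the contrapositive.

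There is really no obstacle here: the content has already been absorbed into Theorem \ref{main} (and ultimately into Lemma \ref{inf}, where the existence of $\min(\Ga_\nu^+)$ was shown to be necessary whenever $I\neq\varnothing$ and $R_\nu[\al]$ is integrally closed). The proof should therefore be no more than a few lines invoking Theorem \ref{main}(i) for one direction and the contrapositive of Theorem \ref{main}(ii) for the other.
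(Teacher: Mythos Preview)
Your proposal is correct and matches the paper's treatment: the paper simply declares the corollary ``immediate'' from Theorem \ref{main}, and your two-line argument (Theorem \ref{main}(i) for the backward direction, contrapositive of Theorem \ref{main}(ii) for the forward direction) is exactly the intended unpacking.
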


\begin{cor}
Keep the assumptions of Theorem \ref{main}. If $\Ga_\nu^+$ has a minimum element $\s$ and $I=\{i\,|\, l_i\geq 2,\, i=1, \dots, s \}$ is not empty, then $R_\nu[\al]$ is integrally closed if and only if $\ol{\phi_i}$ does not divide $\ol{M}$ for every $i\in I$, where $M=\cfrac{f-\prod_{i=1}^s \phi_i^{l_i}}{\pi}$ for any $\pi\in R_\nu$ with $\nu(\pi)=\s$.
\end{cor}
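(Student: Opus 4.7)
The plan is to reduce the statement to Theorem~\ref{main}(ii) by proving that, for each $i\in I$, $\nu^{\mbox{\tiny G}}(r_i)=\s$ if and only if $\ol{\phi_i}$ does not divide $\ol{M}$. One first checks that $M\in R_\nu[X]$: since $\ol{f}=\prod_j\ol{\phi_j}^{\,l_j}$, we have $\nu^{\mbox{\tiny G}}(f-\prod_j\phi_j^{l_j})>0$, and the minimality of $\s=\nu(\pi)$ in $\Ga_\nu^+$ upgrades this to $\nu^{\mbox{\tiny G}}(f-\prod_j\phi_j^{l_j})\geq\s$, so division by $\pi$ is legitimate inside $R_\nu[X]$.

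The key computational step is to identify $r_i$ with $\pi R_i$, where $R_i$ is the Euclidean remainder of $M$ by $\phi_i$. Writing $\prod_{j=1}^s\phi_j^{l_j}=\phi_i N_i$ (possible since $l_i\geq 1$) with $N_i\in R_\nu[X]$, and $M=\phi_i D_i+R_i$ with $\deg R_i<\deg\phi_i$, one rearranges $f=\phi_i(N_i+\pi D_i)+\pi R_i$ and invokes the uniqueness of Euclidean division by the monic polynomial $\phi_i$ to conclude $r_i=\pi R_i$.

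Taking $\nu^{\mbox{\tiny G}}$ then gives $\nu^{\mbox{\tiny G}}(r_i)=\s+\nu^{\mbox{\tiny G}}(R_i)$; since $\s=\min(\Ga_\nu^+)$, this equals $\s$ precisely when $\nu^{\mbox{\tiny G}}(R_i)=0$, i.e., when $\ol{R_i}\neq 0$. But $\ol{R_i}$ is the Euclidean remainder of $\ol{M}$ by $\ol{\phi_i}$ in $k_\nu[X]$ and $\deg R_i<\deg\phi_i=\deg\ol{\phi_i}$, so $\ol{R_i}\neq 0$ is exactly the condition $\ol{\phi_i}\nmid\ol{M}$. The equivalence above, combined with Theorem~\ref{main}(ii), yields the corollary.

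No serious obstacle arises; the argument is essentially bookkeeping. The only subtle points are the uniqueness step identifying $r_i$ with $\pi R_i$, and the independence from the choice of $\pi$, which holds because any two elements of $R_\nu$ of $\nu$-value $\s$ differ by a unit of $R_\nu$, so the corresponding polynomials $M$ are unit multiples in $R_\nu[X]$ and share the same divisors among the $\ol{\phi_i}$ modulo $M_\nu$.
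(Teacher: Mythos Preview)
Your proof is correct and follows essentially the same route as the paper: both arguments reduce to Theorem~\ref{main}(ii) by showing, for each $i\in I$, that $\nu^{\mbox{\tiny G}}(r_i)=\s$ if and only if $\ol{\phi_i}\nmid\ol{M}$, via the identification of $r_i/\pi$ with the Euclidean remainder of $M$ by $\phi_i$. The only cosmetic difference is that the paper reaches $M=H_i\phi_i+r_i/\pi$ by first writing $q_i=\phi_i^{\,l_i-1}\prod_{j\neq i}\phi_j^{\,l_j}+\pi H_i$, whereas you divide $M$ by $\phi_i$ directly and invoke uniqueness to get $r_i=\pi R_i$; your bookkeeping is slightly cleaner, and your remark on independence from the choice of $\pi$ is a welcome addition.
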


\begin{proof}
Let $i\in I$. Since $\ol{r_i}=\ol{f}-\ol{q_i}\;\ol{\phi_i}$ and $\ol{\phi_i}$ divides $\ol{f}$, $\ol{r_i}$ is divisible by $\ol{\phi_i}$. But as $\mbox{deg}(\ol{r_i})\leq \mbox{deg}(r_i) < \mbox{deg}(\phi_i)=\mbox{deg}(\ol{\phi_i})$, $\ol{r_i}$ must be zero. Thus, $\ol{q_i}=\ol{\phi_i^{l_i-1}} \prod_{j=1, j\neq i}^s \ol{\phi_j^{l_j}}$. Let $H_i\in R_\nu[X]$ be such that $q_i=\phi_i^{l_i-1} \prod_{j=1, j\neq i}^s \phi_j^{l_j}+\pi H_i$ with $\pi\in R_\nu$ such that $\nu(\pi)=\s$. Then,
$f=(\phi_i^{l_i-1} \prod_{j=1, j\neq i}^s \phi_j^{l_j}+\pi H_i)\phi_i +r_i$.
Set $M=\cfrac{f-\prod_{j=1}^s \phi_j^{l_j}}{\pi} \in R_\nu[X]$. Then,
\begin{align*}
M&=\cfrac{(\phi_i^{l_i-1} \prod_{j=1, j\neq i}^s \phi_j^{l_j}+\pi H_i)\phi_i +r_i-\prod_{j=1}^s \phi_j^{l_j}}{\pi}\\
&= H_i \phi_i + \cfrac{r_i}{\pi}.
\end{align*}
Since $M, H_i\phi_i \in R_\nu[X]$, we must have $\cfrac{r_i}{\pi} \in R_\nu[X]$ and so $\nu^G(\cfrac{r_i}{\pi})\geq \s$. Clearly, $\ol{\phi_i}$ divides $\ol{M}$ if and only if $\ol{\phi}$ divides $\cfrac{\ol{r_i}}{\ol{\pi}}$. As $\mbox{deg}(\cfrac{\ol{r_i}}{\ol{\pi}})=\mbox{deg}(\ol{r_i})< \mbox{deg}(\ol{\phi_i})$ (see above), we conclude that $\ol{\phi_i}$ divides $\ol{M}$ if and only if $\cfrac{\ol{r_i}}{\ol{\pi}}$ is zero, that is $\nu^G(r_i)>\s$. Contrapositively, $\ol{\phi_i}$ does not divide $\ol{M}$ if and only if $\nu^G(r_i)=\s$.
\end{proof}

Our second main result gives a new characterization of the integral closedness of $R_\nu[\al]$ based on characterization of the extensions of $\nu$ to $L$. The following definition and lemma are needed.

\begin{dfn}
We say that a monic polynomial $g\in R_\nu[X]$ is {\it $\nu$-Eisenstein} if there exists a monic polynomial $\psi\in R_\nu[X]$ such that $\ol{\psi}$ is irreducible, $\ol{g}$ is a positive power of $\ol{\psi}$, and $\nu^{\mbox{\tiny G}}(r)=\mbox{min}(\Ga_\nu^+)$, where $r\in R_\nu[X]$ is the remainder upon the Euclidean division of $g$ by $\psi$.
\end{dfn}

\begin{lem}\label{Eis 1}
Keep the assumptions of Theorem \ref{main}. Let $\min(\Ga_\nu^+)=\s$ and $g \in R_\nu[X]$ monic. If $g$ is $\nu$-Eisenstein, then $g$ is irreducible over $K$.
\end{lem}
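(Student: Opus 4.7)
The plan is to argue by contradiction: suppose $g = g_1 g_2$ in $K[X]$ with both $g_1, g_2$ monic of positive degree, and derive a contradiction with $\nu^{\mbox{\tiny G}}(r) = \s$.

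First I would run a Gauss-lemma argument for the Gaussian extension $\nu^{\mbox{\tiny G}}$: since $g$ is monic in $R_\nu[X]$, $\nu^{\mbox{\tiny G}}(g) = 0$; since each $g_i$ is monic, $\nu^{\mbox{\tiny G}}(g_i) \leq 0$; and multiplicativity $\nu^{\mbox{\tiny G}}(g_1) + \nu^{\mbox{\tiny G}}(g_2) = \nu^{\mbox{\tiny G}}(g) = 0$ forces both $\nu^{\mbox{\tiny G}}(g_i) = 0$, so $g_1, g_2 \in R_\nu[X]$. Reducing modulo $M_\nu$, the equality $\ol{g_1}\,\ol{g_2} = \ol{g} = \ol{\psi}^{\,l}$ together with the irreducibility of $\ol{\psi}$ and the fact that each $\ol{g_i}$ is monic of positive degree forces $\ol{g_i} = \ol{\psi}^{\,a_i}$ for some $a_i \geq 1$ with $a_1 + a_2 = l$.

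Next I would perform the Euclidean divisions $g_i = q_i \psi + r_i$ in $R_\nu[X]$ with $\mbox{deg}(r_i) < \mbox{deg}(\psi)$. Since $\ol{\psi}$ divides $\ol{g_i} = \ol{q_i}\,\ol{\psi}+\ol{r_i}$ and $\mbox{deg}(\ol{r_i}) < \mbox{deg}(\ol{\psi})$, we get $\ol{r_i} = 0$, i.e., $\nu^{\mbox{\tiny G}}(r_i) \geq \s$. Pick $\pi \in R_\nu$ with $\nu(\pi) = \s$; then each coefficient of $r_i$ is divisible by $\pi$, so $r_i = \pi s_i$ with $s_i \in R_\nu[X]$. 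Expanding
\[ g = g_1 g_2 = \bigl(q_1 q_2 \psi + q_1 r_2 + q_2 r_1\bigr)\psi + r_1 r_2, \]
with $r_1 r_2 = \pi^2 s_1 s_2$, and then carrying out one further Euclidean division $s_1 s_2 = u\psi + v$ in $R_\nu[X]$ with $\mbox{deg}(v) < \mbox{deg}(\psi)$, the uniqueness of Euclidean division of $g$ by the monic $\psi$ identifies the remainder $r$ as $\pi^2 v$. Hence $\nu^{\mbox{\tiny G}}(r) \geq 2\s > \s$, contradicting $\nu^{\mbox{\tiny G}}(r) = \s$.

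I do not anticipate a real obstacle; the Gauss-lemma step relies only on $\nu^{\mbox{\tiny G}}$ being a genuine valuation on $K(X)$, and the final identification of $r$ with $\pi^2 v$ is just uniqueness of Euclidean division by the monic polynomial $\psi$, together with the fact that $2\s > \s$ holds in any ordered abelian group for $\s > 0$.
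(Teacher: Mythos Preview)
Your argument is correct and follows essentially the same route as the paper's own proof: assume a nontrivial factorization, observe that each factor reduces to a positive power of $\ol{\psi}$, deduce that the remainders $r_i$ upon division by $\psi$ satisfy $\nu^{\mbox{\tiny G}}(r_i)\ge\s$, and conclude $\nu^{\mbox{\tiny G}}(r)\ge 2\s>\s$. The only cosmetic difference is that the paper assumes the factors lie in $R_\nu[X]$ and invokes Gauss's Lemma at the end, whereas you carry out the Gauss step explicitly at the start via $\nu^{\mbox{\tiny G}}$; and you spell out $r_i=\pi s_i$ and the extra division $s_1 s_2=u\psi+v$ to pin down $r=\pi^2 v$, which the paper leaves implicit.
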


\begin{proof}
Let $\psi \in R_\nu[X]$ be monic such that $\ol{\psi}$ is irreducible, $\ol{g} =\ol{\psi}^l$, and $\nu^{\mbox{\tiny G}}(r)=\s$, where $r\in R_\nu[X]$ is the remainder upon the Euclidean division of $g$ by $\psi$. To the contrary, suppose that $g=h_1 h_2$ for some non-constant and monic $h_1, h_2\in R_\nu[X]$. Then we have $\ol{h_1}=\ol{\psi}^{l_1}$ and $\ol{h_2}=\ol{\psi}^{l_2}$ for some positive $l_1, l_2$ with $l_1+l_2=l$. Assume that the Euclidean division of each of $g, h_1, h_2$ by $\psi$ yield
$$g=q\psi+r,\;\; h_1=q_1 \psi + r_1, \;\; h_2=Q_2\psi+ r_2.$$
It is clear that $r$ is the remainder upon the Euclidean division of the product $r_1 r_2$ by $\psi$. Since both $\ol{h_1}$ and $\ol{h_2}$ are positive powers of $\ol{\psi}$, both of  $\ol{r_1}$ and $\ol{r_2}$ must be zero. So, $\nu^{\mbox{\tiny G}}(r_1)\geq \s$ and $\nu^{\mbox{\tiny G}}(r_2)\geq \s$. Thus, $\nu^{\mbox{\tiny G}}(r)\geq 2\s> \s$ (as $\s>0$), which is a contradiction. Hence, $g$ is irreducible over $R_\nu$ and, consequently, irreducible over $K$ (by Gauss's Lemma, as $R_\nu$ is integrally closed).
\end{proof}

\begin{thm}\label{main 2}
Keep the assumptions of Theorem \ref{main}. The following statements are equivalent:
\begin{itemize}
\item[(i)] $R_\nu[\al]$ is integrally closed.
\item[(ii)] $\nu$ has exactly $s$ distinct extensions $\om_1, \dots, \om_s$ to $L$, and if $I=\{i\,|\, l_i\geq 2,\, i=1, \dots, s \}$ is not empty, then $l_i\om_i(\phi_i(\al))$ is the minimum element of $\Ga_\nu^+$ for every $i\in I$, where $\om_i$ is a valuation satisfying $\om_i(\phi_i(\al))>0$ according to Lemma \ref{lemma}.
\end{itemize}
\end{thm}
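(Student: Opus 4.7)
The plan is to treat both directions by appealing to Theorem~\ref{main}(ii), which recasts integral closedness as the condition $\nu^{\mbox{\tiny G}}(r_i)=\s:=\min(\Ga_\nu^+)$ for every $i\in I$, and to translate between this condition and the valuation-theoretic data of (ii) using Lemmas~\ref{lemma} and~\ref{sigma}. The extra ingredient, the exact count $t=s$ of extensions of $\nu$ to $L$, will come from combining the Henselization factorization $f=\prod_{j=1}^t f_j$ with the fundamental inequality over Henselian valued fields; note that Lemma~\ref{lemma}(iii) already yields $t\ge s$ through the surjection $j\mapsto i(j)$ that sends each $\om_j$ to the unique index $i(j)$ with $\om_j(\phi_{i(j)}(\al))>0$. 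The case $I=\emptyset$ is immediate from Theorem~\ref{main}(i), so I concentrate on $I\ne\emptyset$.

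For (i)$\Rightarrow$(ii), I would first apply Theorem~\ref{main}(ii) to get $\nu^{\mbox{\tiny G}}(r_i)=\s$ for every $i\in I$, and then use Lemma~\ref{sigma} to get the valuation equality $\om(\phi_i(\al))=\s/l_i$ for every such $i$ and every extension $\om$ of $\nu$ with $\om(\phi_i(\al))>0$. The substantive step is $t=s$: for $i\in I$ and $j$ with $i(j)=i$, I pass to $L_j=K^h(\al_j)$ with its unique extension of $\nu^h$, and bound the local invariants. Minimality of $\s$ in $\Ga_\nu^+$ forces $\s/l_i$ to have order exactly $l_i$ in $\Ga_{\om_j}/\Ga_\nu$, giving ramification index $e_j\ge l_i$; the residue $\ol{\al_j}$ being a root of $\ol{\phi_i}$ gives residue degree $f_j\ge\deg\phi_i$; and the fundamental inequality yields $u_j\deg\phi_i=[L_j:K^h]\ge e_jf_j\ge l_i\deg\phi_i$, so $u_j\ge l_i$. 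Combined with $\sum_{j\in J_i}u_j=l_i$, where $J_i:=\{j:i(j)=i\}$, this forces $|J_i|=1$; the case $i\notin I$ is automatic since $l_i=1$ already bounds $|J_i|$ by~$1$.

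For (ii)$\Rightarrow$(i), my aim is to verify $\nu^{\mbox{\tiny G}}(r_i)=\s$ for every $i\in I$ and then invoke Theorem~\ref{main}(ii). Setting $m_i:=\prod_{k\ne i}\phi_k^{l_k}$ gives $f-m_i\phi_i^{l_i}\in M_\nu[X]$, and Euclidean division by $\phi_i$ presents $f$ in the form $f=m_i\phi_i^{l_i}+n_i\phi_i+r_i$ with $\nu^{\mbox{\tiny G}}(n_i),\nu^{\mbox{\tiny G}}(r_i)\ge\s$ and $\deg r_i<\deg\phi_i$. Evaluating $f(\al)=0$ and using $\om_i(m_i(\al))=0$ (Lemma~\ref{lemma}(iv), since $\ol{\phi_i}\nmid\ol{m_i}$) together with the hypothesis $\om_i(\phi_i(\al))=\s/l_i$, the term $m_i(\al)\phi_i(\al)^{l_i}$ has $\om_i$-value exactly $\s$ while $n_i(\al)\phi_i(\al)$ has strictly larger value, so $\om_i(r_i(\al))=\s$. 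A second application of Lemma~\ref{lemma}(iv) to $r_i$, whose degree is below that of $\phi_i$, forces $\om_i(r_i(\al))=\nu^{\mbox{\tiny G}}(r_i)$, completing the verification.

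I expect the main obstacle to be the extension-count $t=s$ in the forward direction: the earlier lemmas provide only a surjective correspondence from extensions of $\nu$ to the residual factors $\ol{\phi_i}$, and promoting this to a bijection genuinely requires the fundamental inequality in the Henselization together with the sharp lower bound $e_j\ge l_i$ coming from the minimality of $\s$ in $\Ga_\nu^+$.
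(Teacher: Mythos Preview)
Your argument is correct, and the converse direction (ii)$\Rightarrow$(i) is essentially identical to the paper's. One small expositional slip: when $I=\emptyset$ you still owe the forward implication the count $t=s$, which is \emph{not} immediate from Theorem~\ref{main}(i); however, your own observation that $\sum_{j\in J_i}u_j=l_i=1$ forces $|J_i|=1$ for every $i\notin I$ already handles this, so no new idea is needed.

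The forward direction (i)$\Rightarrow$(ii) genuinely differs from the paper. The paper first applies Hensel's Lemma in $K^h$ to write $f=\prod_{i=1}^s f_i$ with $\ol{f_i}=\ol{\phi_i}^{\,l_i}$, then shows each $f_i$ with $i\in I$ is $\nu^h$-Eisenstein (by tracking the remainder of $f_i$ modulo $\phi_i$ back to the remainder $r_i$ of $f$ modulo $\phi_i$) and hence irreducible by Lemma~\ref{Eis 1}; Lemma~\ref{Endler} then gives $t=s$ directly. You instead bound ramification and residue from below ($e_j\ge l_i$ via the order of $\sigma/l_i$ in $\Gamma_{\omega_j}/\Gamma_\nu$, $f_j\ge\deg\phi_i$ via $\ol{\phi_i}(\ol{\al_j})=0$) and combine this with the fundamental inequality $[L_j:K^h]\ge e_jf_j$ in the Henselian local extension to force $u_j\ge l_i$, hence $|J_i|=1$. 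In effect you are running the argument of the paper's Corollary~\ref{ram-res} inside the Henselization \emph{before} knowing $t=s$, and using it to deduce the count rather than the other way around. The paper's route is more elementary (it only needs the self-contained Eisenstein Lemma~\ref{Eis 1} rather than the fundamental inequality) and yields the explicit irreducible factorization of $f$ over $K^h$; your route is more conceptual, avoids the auxiliary Eisenstein criterion, and has the pleasant side effect that the equalities $e_i=l_i$, $f_i=\deg\phi_i$ drop out as a byproduct rather than requiring a separate corollary.
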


\begin{proof}
Assume that $R_\nu[\al]$ is integrally closed. Since $k_\nu=k_{\nu^h}$ and $\ol{f}=\prod_{i=1}^s \ol{\phi_i^{l_i}}$, Hensel's Lemma yields a factorization $f=\prod_{i=1}^s f_i$ over $K^h$ such that $\ol{f_i}=\ol{\phi_i^{l_i}}$ for $i=1, \dots, s$. In order for us to invoke Lemma \ref{Endler}, we need to show that the factors $f_1, \dots, f_s$ are all irreducible over $K^h$. If $i\in \{1, \dots, s\}- I$, then $f_i$ is immediately irreducible over $K^h$ since $\ol{f_i}=\ol{\phi_i}$ is irreducible. If $i\in I$, we set to show that $f_i$ is $\nu^h$-Eisenstein and thus irreducible, by Lemma \ref{Eis 1}. Since $R_\nu[\al]$ is integrally closed and $l_i\geq 2$, it follows from Lemma \ref{inf} that $\Ga^+$ has a minimum element $\s$ and $\nu^{\mbox{\tiny G}}(r_i)=\s$. Notice that as $\Ga_\nu=\Ga_{\nu^h}$, $\s$ is the minimum element of $\Ga_{\nu^h}^+$ as well. Let $q_i^*, r_i^*\in R_{\nu^h}[X]$ be, respectively, the quotient and remainder upon the Euclidean division of $f_i$ by $\phi_i$. Letting $G_i =\prod_{j=1,\, j\neq i}^s f_j$, we write $f=f_i G_i=q_i^* \phi_i G_i+r_i^* G_i$. Using the Euclidean division again to divide $r_i^*G_i$ by $\phi_i$, let $r_i^*G_i=q_i^{**} \phi_i + r_i^{**}$, with $q_i^{**}, r_i^{**}\in R_{\nu^h}[X]$. Then we have $f=q_i^* \phi_i G_i + q_i^{**}\phi_i+r_i^{**}= (q_i^*G_i + q_i^{**})\phi_i + r_i^{**}$. Due to the uniqueness of the remainder, we get $r_i=r_i^{**}$. Thus, ${\nu^h}^{\mbox{\tiny G}}(r_i^{**})={\nu^h}^{\mbox{\tiny G}}(r_i)=\nu^{\mbox{\tiny G}}(r_i)=\s$. If ${\nu^h}^{\mbox{\tiny G}}(r_i^*)>\s$, then ${\nu^h}^{\mbox{\tiny G}}(r_i^*G_i)>\s$ and so ${\nu^h}^{\mbox{\tiny G}}(r_i^{**})>\s$, a contradiction. Thus, ${\nu^h}^{\mbox{\tiny G}}(r_i^*)=\s$ and we conclude that $f_i$ is $\nu^h$-Eisenstein as desired. It now follows from Lemma \ref{Endler} that there are exactly $s$ valuations $\om_1, \dots, \om_s$ of $L$ extending $\nu$ and follows from Lemma \ref{sigma} that $l_i \om_i(\phi_i(\al))=\s$ for the valuation $\om_i$ of $L$ extending $\nu$ with $\om_i(\phi_i(\al))>0$.

Conversely, assume that there are exactly $s$ valuations $\om_1, \dots, \om_s$ of $L$ extending $\nu$, and if $I=\{i\,|\, l_i\geq 2,\, i=1, \dots, s \}$ is not empty, then $l_i\om_i(\phi_i(\al))$ is the minimum element of $\Ga_\nu^+$ for every $i\in I$ and every $\om_i$ satisfying $\om_i(\phi_i(\al))>0$. If $I=\varnothing$, then $R_\nu[\al]$ is integrally closed by Theorem \ref{main}. Assume that $I\neq \varnothing$. Following Theorem \ref{main}, in order to show that $R_\nu[\al]$ is integrally closed, it suffices to prove that $\nu^{\mbox{\tiny G}}(r_i)=\s$ for every $i\in I$, where $\s=\mbox{min}(\Ga_\nu^+)$. Let $\om_i$ be the valuation of $L$ extending $\nu$ such that $\om_i(\phi_i(\al))>0$ (by Lemma \ref{lemma}). Then, by assumption, $l_i\om_i(\phi_i(\al))=\s$. Write $f$ in the form $f=m_i \phi_i^{l_i} + n_i \phi_i +r_i$ for $m_i, n_i\in R_\nu[X]$ with $\nu^{\mbox{\tiny G}}(m_i)=0$, $\ol{\phi_i}$ does not divide $\ol{m_i}$, $\nu^{\mbox{\tiny G}}(n_i)>0$, and $\mbox{deg}(r_i)<\mbox{deg}(\phi_i)$. Since $f(\al)=0$, we have $r_i=-m_i\phi_i^{l_i}-n_i \phi_i$. We can see (using Lemma \ref{lemma}-(ii)) that $$\om_i(n_i(\al)\phi_i(\al))=\om_i(n_i(\al))+\om_i(\phi_i(\al))>\om_i(n_i(\al))\geq \nu^{\mbox{\tiny G}}(n_i)\geq \s,$$
and (where $\om_i(m_i(\al))=\nu^{\mbox{\tiny G}}(m_i)=0$ by Lemma \ref{lemma}-(iv))
$$\om_i(m_i(\al)\phi_i(\al)^{l_i})=\om_i(\phi_i(\al)^{l_i})=l_i\om_i(\phi_i(\al))=\s.$$ So, $$\om_i(r_i(\al))=\om_i\left (-m_i(\al)\phi_i(\al)^{l_i}-n_i(\al)\phi_i(\al)\right )=\s.$$
Since $\mbox{deg}(r_i)<\mbox{deg}(\phi_i)$, $\ol{\phi_i}$ does not divide $\ol{r_i}$. So, by Lemma \ref{lemma}-(iv), $\nu^{\mbox{\tiny G}}(r_i)=\om_i(r_i(\al))=\s$ and the proof is complete.
\end{proof}

With the notation of Theorem \ref{main 2}, for a valuation $\om_i$ of $L$ extending $\nu$, we denote the ramification index $[\Ga_{\om_i}:\Ga_\nu]$ by $e(\om_i/\nu)$ and the residue degree $[k_{\om_i}:k_\nu]$ by $f(\om_i/\nu)$. The following {\it fundamental inequality} is well-known (see \cite[Theorem 3.3.4]{Eng} for instance):
$$\sum_{i=1}^s e(\om_i/\nu)f(\om_i/\nu) \leq [L:K].$$
When $R_\nu[\al]$ is integrally closed, we calculate in the following corollary the ramification indices $e(\om_i/\nu)$ and residue degrees $f(\om_i/\nu)$ and consequently show that the above inequality is indeed an equality.

\begin{cor}\label{ram-res}
Keep the notation and assumptions of Theorem \ref{main 2}. If $R_\nu[\al]$ is integrally closed, then $e(\om_i/\nu)=l_i$ and $f(\om_i/\nu)=\mbox{deg}(\phi_i)$, for every $i=1, \dots, s$, and furthermore $\sum_{i=1}^s e(\om_i/\nu)f(\om_i/\nu) = [L:K]$.
\end{cor}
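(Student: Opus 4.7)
The plan is to prove the corollary by a squeeze argument anchored on the fundamental inequality. First, I would record the degree identity
$$[L:K] = \deg(f) = \sum_{i=1}^s l_i \deg(\phi_i),$$
which holds because each $\phi_i$ is monic (so $\deg(\phi_i)=\deg(\ol{\phi_i})$) and $\ol{f}=\prod_{i=1}^s \ol{\phi_i}^{\,l_i}$. Combined with the stated fundamental inequality $\sum_{i=1}^s e(\om_i/\nu) f(\om_i/\nu)\leq [L:K]$, it will suffice to prove the two lower bounds $e(\om_i/\nu)\geq l_i$ and $f(\om_i/\nu)\geq \deg(\phi_i)$ for every $i$; equality in the sum will then force equality termwise.

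For the ramification bound, I would invoke Theorem \ref{main 2}: for $i\in I$ we have $l_i\,\om_i(\phi_i(\al))=\s$, where $\s=\min(\Ga_\nu^+)$. Consider the class of $\om_i(\phi_i(\al))$ in the quotient group $\Ga_{\om_i}/\Ga_\nu$. If its order were some $j$ with $1\leq j<l_i$, then $j\,\om_i(\phi_i(\al))$ would lie in $\Ga_\nu$; being positive (as $\s>0$) and strictly smaller than $\s$ (since $j<l_i$ implies $j\,\om_i(\phi_i(\al))<l_i\,\om_i(\phi_i(\al))=\s$), this would contradict the minimality of $\s$. Hence $e(\om_i/\nu)\geq l_i$. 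When $i\notin I$ the inequality $e(\om_i/\nu)\geq 1 = l_i$ is trivial.

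For the residue degree bound, I would reuse the ring homomorphism $\psi_i\colon k_\nu[X]\to k_{\om_i}$ from the proof of Lemma \ref{lemma}-(iv), defined by $\ol{p}(X)\mapsto p(\al)+M_{\om_i}$. That lemma identifies $\ker \psi_i$ as the principal ideal generated by $\ol{\phi_i}$, so $\psi_i$ induces an injection of fields
$$k_\nu[X]/(\ol{\phi_i})\;\hookrightarrow\; k_{\om_i}.$$
Since $\ol{\phi_i}$ is irreducible over $k_\nu$ of degree $\deg(\phi_i)$, the left-hand side is a degree-$\deg(\phi_i)$ extension of $k_\nu$, forcing $f(\om_i/\nu)\geq \deg(\phi_i)$.

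Finally, I would chain everything together:
$$[L:K] \;=\; \sum_{i=1}^s l_i\deg(\phi_i) \;\leq\; \sum_{i=1}^s e(\om_i/\nu) f(\om_i/\nu) \;\leq\; [L:K],$$
so all intermediate inequalities collapse to equalities, giving simultaneously $e(\om_i/\nu)=l_i$, $f(\om_i/\nu)=\deg(\phi_i)$, and the fundamental equality. The only mildly delicate step is the ramification bound, where the generality of the ordered abelian group $\Ga_\nu$ forbids dividing by integers, but the argument above sidesteps this by working directly with the minimality of $\s$.
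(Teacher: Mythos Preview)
Your proposal is correct and follows essentially the same squeeze argument as the paper: establish $e(\om_i/\nu)\geq l_i$ and $f(\om_i/\nu)\geq \deg(\phi_i)$, then sandwich with the fundamental inequality. The only cosmetic differences are that the paper phrases the ramification bound via $[\Ga_\nu[\s/l_i]:\Ga_\nu]=l_i$ (which unwinds to exactly your order-in-the-quotient argument) and obtains the residue bound by looking at a root $\al_i$ of $f_i$ over $K^h$, whereas you stay inside $L$ and use the kernel of $\psi_i$ from Lemma~\ref{lemma}(iv); both routes are equally valid.
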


\begin{proof}
We show first that $e(\om_i/\nu)\geq l_i$ and $f(\om_i/\nu)\geq \mbox{deg}(\phi_i)$ for every $i=1, \dots, s$. If $l_i=1$ for some $i=1, \dots, s$, then clearly $e(\om_i/\nu)\geq l_i$. Since $\ol{f_i}=\ol{\phi_i}$, it follows that for any root $\al_i$ of $f_i$, $\ol{\phi_i}$ is the minimal polynomial of $\ol{\al_i}$ over $k_\nu$ and so
$$\mbox{deg}(\phi_i)=\mbox{deg}(\ol{\phi_i})=[k_\nu(\ol{\al_i}):k_\nu]\leq [k_{\om_i}:k_\nu]=f(\om_i/\nu).$$
If $l_i \geq 2$ for some $i=1, \dots, s$, then it follows from Theorem \ref{main 2} (ii) that $\om_i(\phi_i(\al))=\s/l_i$, where $\s=\min(\Ga^+_\nu)$. So, $\Ga_\nu \subseteq \Ga[\s/l_i] \subseteq \Ga_{\om_i}$ and
$$l_i =[\Ga_\nu[\s/l_i]:\Ga_\nu]\leq [\Ga_{\om_i}:\Ga_\nu] =e(\om_i/\nu).$$
Also, for a root $\al_i$ of $f_i$, we have $\ol{\phi_i}(\ol{\al_i})^{l_i}=\ol{f_i}(\ol{\al_i})= 0$ implying that $\ol{\phi_i}(\ol{\al_i})=0$ in $k_{\om_i}$. Since $\ol{\phi_i}$ is monic and irreducible over $k_\nu$, we have
$$\mbox{deg}(\phi_i)=\mbox{deg}(\ol{\phi_i})=[k_\nu(\ol{\al_i}):k_\nu]\leq [k_{\om_i}:k_\nu]=f(\om_i/\nu).$$
Now, by the above argument we get the inequality
$$\sum_{i=1}^s e(\om_i/\nu) f(\om_i/\nu) \geq \sum_{i=1}^s l_i \mbox{deg}(\phi_i) = \sum_{i=1}^s  l_i \mbox{deg}(\ol{\phi_i}) = \mbox{deg}(\ol{f}) = \mbox{deg}(f) =[L:K].$$
Thus, by this inequality and the fundamental inequality, we get the claimed equality
$$\sum_{i=1}^s e(\om_i/\nu) f(\om_i/\nu)=[L:K].$$
Furthermore, since $l_i \leq e(\om_i/\nu)$ and $\mbox{deg}(\phi_i)\leq f(\om_i/\nu)$ for every $i=1, \dots, s$ with \linebreak $\sum_{i=1}^s l_i \, \mbox{deg}(\phi_i)=  \sum_{i=1}^s e(\om_i/\nu) f(\om_i/\nu)$, we conclude that $l_i=e(\om_i/\nu)$ and $\mbox{deg}(\phi_i)=f(\om_i/\nu)$ for every $i=1, \dots, s$.
\end{proof}

\section{{\bf Applications and Examples}}\label{applications}

\begin{cor}
Keep the assumptions of Theorem \ref{main} with $f(X)=X^n-a\in R_\nu[X]$ irreducible of degree $n\geq 2$ and $a\in M_\nu$.

1. If $\Ga_\nu^+$ has no minimum element, then $R_\nu[\al]$ is not integrally closed.

2. If  $\min(\Ga_\nu^+)=\s$, then $R_\nu[\al]$ is integrally closed if and only if $\nu(a)=\s$.
\end{cor}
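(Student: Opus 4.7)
The plan is to apply Theorem \ref{main} (together with its immediate corollary about the case when $\Ga_\nu^+$ has no minimum element) after first identifying the relevant data from \textbf{\emph{Assump's}}. Since $a\in M_\nu$, we have $\ol{a}=0$ and hence $\ol{f}=\ol{X}^n$ in $k_\nu[X]$. Therefore the factorization of $\ol{f}$ into monic irreducibles over $k_\nu$ has a single factor: $s=1$, $\ol{\phi_1}=\ol{X}$ with lift $\phi_1=X\in R_\nu[X]$, and $l_1=n\geq 2$. In particular, $I=\{1\}$ is nonempty, so we are always in the setting of part (ii) of Theorem \ref{main}.

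Next I would perform the Euclidean division of $f=X^n-a$ by $\phi_1=X$, which trivially yields $q_1=X^{n-1}$ and $r_1=-a$. Consequently $\nu^{\mbox{\tiny G}}(r_1)=\nu(a)$, which is a strictly positive element of $\Ga_\nu$ since $a\in M_\nu$.

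For part 1, I would invoke the first corollary following Theorem \ref{main}: if $\Ga_\nu^+$ has no minimum element, then $R_\nu[\al]$ is integrally closed if and only if every $l_i=1$. Since $l_1=n\geq 2$ here, $R_\nu[\al]$ cannot be integrally closed. For part 2, I would apply Theorem \ref{main} (ii) directly: with $I=\{1\}$ nonempty, $R_\nu[\al]$ is integrally closed if and only if $\nu^{\mbox{\tiny G}}(r_1)=\min(\Ga_\nu^+)=\s$, which by the computation above is the same as the condition $\nu(a)=\s$.

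There is no real obstacle in this proof; once $\phi_1=X$, $l_1=n$, and $r_1=-a$ are identified, both statements reduce to direct invocations of results already established in the paper. The only point warranting a brief mention is why $\phi_1=X$ is an admissible lift and why $s=1$, but this is immediate from $a\in M_\nu$ forcing $\ol{f}=\ol{X}^n$.
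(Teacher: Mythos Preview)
Your proposal is correct and follows exactly the route the paper intends: the paper's own proof is the single sentence ``This is a direct application of Theorem \ref{main},'' and your write-up simply unpacks that application by identifying $s=1$, $\phi_1=X$, $l_1=n\geq 2$, and $r_1=-a$, then reading off the conclusions from Theorem \ref{main}(ii) (and, for part 1, its immediate corollary on the case where $\Ga_\nu^+$ has no minimum).
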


\begin{proof}
This is a direct applications of Theorem \ref{main}.
\end{proof}

\begin{cor}\label{Eis 2}
Keep the assumptions of Theorem \ref{main}. Let $\min(\Ga_\nu^+)=\s$ and $g\in R_\nu[X]$ monic. If $g$ is $\nu$-Eisenstien and $L=K(\t)$ for some root $\t$ of $g$, then $R_\nu[\t]$ is integrally closed.
\end{cor}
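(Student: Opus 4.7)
The plan is to reduce the corollary directly to Theorem \ref{main} applied with $f$ replaced by $g$ and the single candidate lifted irreducible factor taken to be $\psi$.

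First I would check that the hypotheses of \textbf{Assump's} are in force for $g$ and $\t$. By Lemma \ref{Eis 1}, $g$ is irreducible over $K$, so $g$ is the minimal polynomial of $\t$ over $K$. Since $L=K(\t)$ coincides with the separable extension in \textbf{Assump's}, $\t$ is separable over $K$, and therefore $g$ is separable. Thus \textbf{Assump's} are satisfied with $f$ replaced by $g$ and $\al$ replaced by $\t$.

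Next I would identify the factorization data. By definition of $\nu$-Eisenstein, $\ol{g}=\ol{\psi}^{\,l}$ for some $l\geq 1$, with $\ol{\psi}$ irreducible in $k_\nu[X]$. Hence in the notation of \textbf{Assump's} for $g$, there is a single index ($s=1$), and we may take $\phi_1=\psi$ and $l_1=l$. Writing $r_1$ for the remainder of the Euclidean division of $g$ by $\phi_1=\psi$, the $\nu$-Eisenstein hypothesis gives exactly $\nu^{\mbox{\tiny G}}(r_1)=\s=\min(\Ga_\nu^+)$.

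Finally I would apply Theorem \ref{main}. If $l=1$, then part (i) of that theorem yields that $R_\nu[\t]$ is integrally closed. If $l\geq 2$, then $I=\{1\}\neq\varnothing$ and the equality $\nu^{\mbox{\tiny G}}(r_1)=\min(\Ga_\nu^+)$ is precisely the condition in part (ii), so again $R_\nu[\t]$ is integrally closed. Since no computation beyond matching definitions is required, the only mildly delicate point — and the one I would make sure to highlight — is the separability of $g$, which follows cleanly from $L=K(\t)=K(\al)$ being a separable extension of $K$.
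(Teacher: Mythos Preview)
Your proposal is correct and follows exactly the route the paper takes: invoke Lemma \ref{Eis 1} for irreducibility and then apply Theorem \ref{main} with $s=1$, $\phi_1=\psi$, $l_1=l$. The paper's proof is just a two-sentence pointer to these facts, and your version simply spells out the details (including the separability check) that the paper leaves implicit.
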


\begin{proof}
By Lemma \ref{Eis 1}, $g$ is irreducible over $K$. Now, the remaining part follows directly from Theorem \ref{main}.
\end{proof}

\begin{cor}
Let $f(X)=X^n-a\in R_\nu[X]$, $\min(\Ga^+)=\s$, $\nu(a)=m\s$ for some $m\in \N$. Let $L=K(\t)$ be a root of $f(X)$. If $m$ and $n$ are coprime, then $f$ is irreducible over $R$ and $R[\t^v/\pi^u]$ is the integral closure of $R$ in $L$, where $\pi\in R_\nu$ is such that $\nu(\pi)=\s$, and $u,v\in \Z$ are the unique integers such that $mv-nu=1$ and $0\leq v < n$.
\end{cor}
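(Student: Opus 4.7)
The plan is to reduce to the $\nu$-Eisenstein case handled by Corollary \ref{Eis 2}, by replacing $\t$ with the generator $\beta=\t^v/\pi^u$ of the same field extension, chosen so that its minimal polynomial has constant term of valuation exactly $\s$. The Bezout-type identity $mv-nu=1$ is precisely what makes this work.

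First I would compute $\beta^n$. Since $\t^n=a$, one gets
$$\beta^n=\frac{\t^{nv}}{\pi^{nu}}=\frac{a^v}{\pi^{nu}}=:b,\qquad \nu(b)=vm\s-un\s=(mv-nu)\s=\s.$$
Thus $b\in M_\nu\subset R_\nu$, and $\beta$ is a root of the monic polynomial $g(X)=X^n-b\in R_\nu[X]$. Since $\nu(b)>0$ we have $\ol{g}(X)=X^n=\ol{\psi}^n$ with $\psi=X$, and the Euclidean division of $g$ by $\psi$ yields remainder $-b$ with $\nu^{\mbox{\tiny G}}(-b)=\s=\min(\Ga_\nu^+)$. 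Hence $g$ is $\nu$-Eisenstein.

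Next, by Lemma \ref{Eis 1}, $g$ is irreducible over $K$, so $[K(\beta):K]=n$. Since $\beta\in K(\t)=L$ and $[L:K]\leq n$ (as $\t$ satisfies $f$ of degree $n$), I obtain $[L:K]=n$, hence $L=K(\beta)$ and $f=X^n-a$ is irreducible over $K$ — and therefore over $R_\nu$ by Gauss's Lemma, since $R_\nu$ is integrally closed. This settles the irreducibility part of the statement.

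Finally, applying Corollary \ref{Eis 2} to the $\nu$-Eisenstein polynomial $g$ with $L=K(\beta)$, I conclude that $R_\nu[\beta]=R_\nu[\t^v/\pi^u]$ is integrally closed in $L$ and hence coincides with the integral closure $S$ of $R_\nu$ in $L$. The only real idea needed is the observation that $f$ itself is \emph{not} $\nu$-Eisenstein when $m\geq 2$ (its remainder mod $\phi=X$ has valuation $m\s>\s$); the Bezout exponents $(u,v)$ are exactly what is required to adjust $\t$ into a generator whose $n$-th power sits at the minimal positive valuation $\s$, after which the Eisenstein machinery already developed in the paper delivers the result.
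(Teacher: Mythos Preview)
Your proof is correct and follows essentially the same approach as the paper's: both introduce $b=a^v/\pi^{nu}$, observe that $\nu(b)=\s$ so that $g(X)=X^n-b$ is $\nu$-Eisenstein, then invoke Lemma~\ref{Eis 1} for irreducibility and Corollary~\ref{Eis 2} for integral closedness. Your write-up is in fact more complete than the paper's, since you explicitly justify that $K(\beta)=L$ and deduce the irreducibility of $f$, steps the paper leaves implicit.
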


\begin{proof}
Let $A=a^v/\pi^{nu}$. Then, $\nu(A)=(mv-nu)\s=\s$. By Lemma \ref{Eis 1}, $g(X)=X^n-A$ is irreducible over $R_\nu$. Furthermore, $\t^v/\pi^u$ is a root of $g$. By Corollary \ref{Eis 2}, $R_\nu[\t^v/\pi^u]$ is integrally closed.
\end{proof}

\begin{example}
Let $\geq$ be the lexicographic order defined on $\Z^2$; that is: $(a,b)\geq (c,d)$ if and only if ($a<c$) or ($a=c$ and $b\leq d$). Then $(\Z^2, \geq)$ is a totally ordered abelian group. Let $F$ be a field and $K=F(X,Y)$, the field of rational functions over $F$ in an indeterminates $X$ and $Y$. Define the valuation $\nu:K\to \Z^2 \cup \{\infty\}$ by $0\neq \sum_{i,j} a_{i,j} X^i Y^j \mapsto \min \{(i,j)\,|\, a_{i,j}\neq 0\}$ for $\sum_{i,j} a_{i,j} X^i Y^j\in F[X,Y]$, $0\mapsto \infty$, and $\nu^{\mbox{\tiny G}}(f/g)=\nu^{\mbox{\tiny G}}(f)-\nu^{\mbox{\tiny G}}(g)$ for $f,g\in F[X,Y]$ with $g\neq 0$. Then, obviously, $\nu$ is a discrete valuation on $K$ of rank 2 whose value group is $\Ga_\nu=(\Z^2, \geq)$. Let $f(Z)=Z^3+aZ+b\in R_\nu[Z]$ be irreducible and $L=K(\al)$ for some root $\al$ of $f$. Assume that $\nu(a)>(0,0)$ and $\nu(b)>(0,0)$. Then, $\ol{f}(Z) = Z^3$. Let $r$ be the remainder upon the Euclidean division of $f$ by $Z$. Noting that $\min(\Ga_\nu^+)=(0,1)  $, it follows from Theorem \ref{main}, $R_\nu[\al]$ is integrally closed if and only if $\nu^{\mbox{\tiny G}}(r)=(0,1)$. In particular, if $f(Z)=Z^3+Y$, then $R_\nu[\al]$ is integrally closed; while if $f(Z)=Z^3 +YZ+X$, then $R_\nu[\al]$ is not integrally closed.
\end{example}

\begin{example}
Let $(F,\nu)$ be a valued field and $K=F(X)$ the field of rational functions over $F$ in an indeterminate $X$. For some positive irrational real number $\la$, define the valuation \linebreak $\om:K\to \R\cup\{\infty\}$ as follows: $\om(0)=\infty$, for $0\neq f(X)=\sum_{i=0}^n a_i X^i \in F[X]$, set $\displaystyle{\om(f)=\min\{\nu(a_i)+i\la, i\}}$, and for $f, g\in F[X]$ with $g\neq 0$, $\om(f/g)=\om(f)-\om(g)$ (see \cite[Theorem 2.2.1]{Eng}). Let $f(Z)=Z^3+aZ+b\in R_\om[Z]$ be irreducible and $L=K(\al)$ for some root $\al$ of $f$. If $(F,\nu)$ is the trivial valued field, then $\Ga_\om=\la\Z$. So, in this case, if $\nu(a)>0$ and $\nu(b)>0$, then $\ol{f}(Z)=Z^3$. Hence, by Theorem \ref{main}, $R_\om[\al]$ is integrally closed if and only if $\nu(a)=\la$. In particular, if $f(Z)=Z^3+X$, then $R_\om[\al]$ is integrally closed. If $F=\mathbb{Q}$ and $\nu$ is the $p$-adic valuation on $\mathbb{Q}$ for some prime integer $p$, then $\Ga_\om=\Z+\la \Z$, which is dense in $\R$ and, thus, $\inf(\Ga_\om^+)=0$. So, according to Theorem \ref{main}, $R_\om[\al]$ is integrally closed if and only if $\ol{f}$ is square-free. In particular, if $\nu(a)>0$ and $\nu(b)>0$, then $\ol{f}(Z)=Z^3$ and, thus, $R_\om[\al]$ is not integrally closed.
\end{example}

\section*{Acknowledgement}
A. Deajim would like to express his gratitude to King Khalid University for providing administrative and technical support. He would also like to thank the University Council and the Scientific Council of King Khalid University for approving a sabbatical leave request for the academic year 2018-2019, during which this paper was prepared and submitted.


\begin{thebibliography} {Dillo 83}

\bibitem{DE}
{A. Deajim} and {L. El Fadil}, {\em On the integral closedness of $R[\al]$}, to appear in Math. Reports.

\bibitem{Ded}
{R. Dedekind}, {\em Uber den zussamenhang zwischen der theorie der ideals und der theorie der hoheren cyclotimy index}, Abh. Akad.
Wiss. Gottingen, Math.-Phys. KL {\bf 23} (1878) 1--23.

\bibitem{End}
{O. Endler}, {\em Valuation Theory}, Springer-Verlag, Berlin, 1972.

\bibitem{Eng}
{A. Engler} and {A. Prestel}, {\em Valued Fields}, Springer-Verlag, Berlin, 2005.

\bibitem{Er}
{Y. Ershov}, {\em A Dedeking criterion for arbitrary valuation rings}, Doklady Math {\bf 74} (2006), 650--652.




\bibitem{KK2}
{S. Khanduja} and {M. Kumar}, {\em A generalization of Dedekind criterion}, Commun. in Algebra {\bf 35} (2007), 1479--1486.

\bibitem{KK1}
{S. Khanduja} and {M. Kumar}, {\em On Dedekind criterion and simple extensions of valuation rings}, Commun. in Algebra {\bf 38} (2010), 684--696.

\bibitem{Neu}
{J.  Neukirch}, {\em Algebraic Number Theory}, Springer-Verlag, 1999.

\end{thebibliography}
\end{document}